\documentclass[amstex,12pt,reqno]{amsart}
\usepackage{amsmath, amssymb, amsthm}

\usepackage{mathrsfs}
\usepackage[all]{xy}
\usepackage{tikz}
\usetikzlibrary{intersections,calc,arrows.meta,patterns}

\setlength{\oddsidemargin}{0cm}
\setlength{\evensidemargin}{0cm}
\setlength{\textwidth}{16cm}
\setlength{\footskip}{30pt}

\newtheorem{df}{Definition}
\newtheorem{prop}{Proposition}
\newtheorem{theo}{Theorem}
\newtheorem{lem}{Lemma}
\newtheorem{cor}{Corollary}

\title
[Dirichlet spaces over chord-arc domains]
{Dirichlet spaces over chord-arc domains}

\author[H. Wei]{Huaying Wei} 
\address{Department of Mathematics and Statistics, Jiangsu Normal University \endgraf Xuzhou 221116, PR China} 
\email{hywei@jsnu.edu.cn} 

\author[M. Zinsmeister]{Michel Zinsmeister}
\address{Institut Denis Poisson, Universit\'e d' Orl\'eans, 
Orl\'eans, 45067, France}
\email{zins@univ-orleans.fr}

\makeatletter
\@namedef{subjclassname@2020}{%
\textup{2020} Mathematics Subject Classification}
\makeatother

\subjclass[2020]{31A05, 31C25, 30C62}
\keywords{Dirichlet space, chord-arc curve, quasicircle, Ahlfors-regular curve,
Douglas formula}
\thanks{Research supported by 
 the National Natural Science Foundation of China (Grant No. 12271218).}

\begin{document}

\begin{abstract}
If $U$ is a $C^{\infty}$ function with compact support in the plane, we let $u$ be its restriction to the unit circle $\mathbb{S}$, and denote by $U_i,\,U_e$ the harmonic extensions of $u$ respectively in the interior and the exterior of $\mathbb S$ on the Riemann sphere. About a hundred years ago, Douglas \cite{dou} has shown that
\begin{align*}
    \iint_{\mathbb{D}}|\nabla U_i|^2(z)dxdy&= \iint_{\bar{\mathbb{C}}\backslash\bar{\mathbb{D}}}|\nabla U_e|^2(z)dxdy\\
    &= \frac{1}{2\pi}\iint_{\mathbb S\times\mathbb S}\left|\frac{u(z_1)-u(z_2)}{z_1-z_2}\right|^2|dz_1||dz_2|,
\end{align*}
thus giving three ways to express the Dirichlet norm of $u$. On a rectifiable Jordan curve $\Gamma$ we have obvious analogues of these three expressions, which will of course not be equal in general. The main goal of this paper is to show that these $3$ (semi-)norms are equivalent if and only if $\Gamma$ is a chord-arc curve.
\end{abstract}

\maketitle

\section{Introduction}
If $\Omega$ is a domain of the Riemann sphere, the Dirichlet space $\mathcal{D}(\Omega)$ is the space of harmonic functions $U:\Omega\mapsto \mathbb{C}$ such that
$$\iint_\Omega|\nabla U|^2(z)dxdy<\infty.$$
The Dirichlet space over the unit disk $\mathbb D$ is, together with the Hardy and Bergman spaces, one of three most classical Hilbert spaces in the unit disk. The key references are survey articles \cite{arc,Ros} and the book \cite{EKMR}. 
It plays an important role in domains as distinct as Minimal Surfaces, Operator Theory and Teichm\"uller Theory (e.g.\cite{nag}). Concerning the latter, there has been recently a regain of interest with the explosion of results about Weil-Petersson curves; that is the connected component of the identity in the universal Teichm\"uller space viewed as a complex Hilbert manifold; see   \cite{bis, she, tak, wan}.

In the case of unit disk, we will see in the next section that functions $U_i$ in $\mathcal{D}(\mathbb{D})$ have well-defined limits $u$ on the unit circle $\mathbb{S}$ that moreover characterize $U_i$. We may thus consider the Dirichlet space $\mathcal{D}(\mathbb{D})$ as a quotient space of a space of functions defined on  $\mathbb S$ modulo constants. Moreover, using the reflection $z\mapsto 1/\bar z$ we have that this latter space coincides with the space of boundary values of functions $U_e$ in $\mathcal{D}(\mathbb{D}_e)$ , where $\mathbb{D}_e$ is the unbounded component of $\bar{\mathbb{C}}\backslash \mathbb{S}$. A deep theorem of Douglas \cite{dou} asserts that this space of functions coincides with the space $H^{1/2}(\mathbb{S})$.  Its norm may thus be expressed in three equal ways as follows:
\begin{align*}
 \|u\|_i^2 :=&\frac{1}{2\pi}\iint_{\mathbb D} |\nabla U_i|^2(z)dxdy\\
 = &\|u\|_e^2 :=\frac{1}{2\pi}\iint_{\mathbb{D}_e} |\nabla U_e|^2(z)dxdy\\
= &\|u\|_{H^{1/2}(\mathbb S)}^2 := \iint_{\mathbb S \times \mathbb S}  \Big|\frac{u(z_1) - u(z_2)}{z_1 - z_2}\Big|^2 \frac{|dz_1|}{2\pi}\frac{|dz_2|}{2\pi},
 \end{align*}
where  $\|u\|_i^2$ is called the interior energy of $u$, while $\|u\|_e^2$ is the exterior energy.   $\|\cdot\|_{H^{1/2}(\mathbb S)}$ is called the Douglas norm, and the equality of the first and third integrals is the Douglas formula, 
as introduced by Douglas in his solution of the Plateau problem (\cite{ah2, dou}). The formula inspired important developments in the theory of Dirichlet forms; see \cite{fuk}.

Inspired by Problem 38 of \cite{arc} which consists in developing a theory of Dirichlet spaces in planar domains, our aim in this paper is to start 
investigating Dirichlet spaces over planar domains and more specifically to find the right class of Jordan domains for which these three norms make sense and are equivalent (we cannot of course expect the equality in general). Since the analogue of the Douglas norm makes sense only for rectifiable curves we will , from now on, restrict our study to rectifiable Jordan curves. For such a curve $\Gamma$ we thus define the space (modulo constants) $H^{1/2}(\Gamma)$ of mesurable functions $u:\Gamma\to \mathbb{C}$ such that
$$\|u\|_{H^{1/2}(\Gamma)}^2 := \frac{1}{4\pi^2} \iint_{\Gamma\times \Gamma}  \Big|\frac{u(z_1) - u(z_2)}{z_1 - z_2}\Big|^2 |dz_1||dz_2|<\infty.$$
About the other two norms $\|\cdot\|_i,\,\|\cdot\|_e$, we will show that it  happens to be true in the good case that the spaces of boundary functions  of $\mathcal{D}(\Omega)$ and $\mathcal{D}(\Omega_e)$ coincide, where $\Omega$ and $\Omega_e$ denote the bounded and unbounded components of the complement of $\Gamma$, respectively. In order to make an explanation on this claim at this stage, 
we consider instead the space $E(\Gamma)$ of restrictions to $\Gamma $ of $C^{\infty}$ functions with compact support in the whole plane, and will see that the norms $\|\cdot\|_i,\,\|\cdot\|_e$ are equivalent on $E(\Gamma)$ for a rectifiable Jordan curve $\Gamma$ if and only if it is a quasicircle. We may define, for such a curve, $\mathcal{H}(\Gamma)$ as the completion of $E(\Gamma)$ with respect to one of these two  equivalent norms (but this is not the order in which we will derive things).

The main result of this paper will be that  $H^{1/2}(\Gamma)=\mathcal{H}(\Gamma)$ if and only if $\Gamma$ is a chord-arc curve, a rectifiable quasicircle with the property of Ahlfors-regularity. 

The paper is structured as follows: In Sect. 2 we give the precise definition of the Dirichlet space over any Jordan domain. 
In Sect. 3 we study the case of quasicircles and define precisely the ``two-sided" space $\mathcal{H}(\Gamma)$. 
Sect. 4 is devoted to prove that $\mathcal{H}(\Gamma)=H^{1/2}(\Gamma)$ when $\Gamma$ is a chord-arc curve. 
In Sect. 5 we consider the sharpness of the result in Sect. 4: we prove more precisely that if $\Gamma$ is a rectifiable quasicircle such that $\mathcal{H}(\Gamma)\subset H^{1/2}(\Gamma)$ then $\Gamma$ must be chord-arc and the two spaces are equal. On the other hand we construct a rectifiable quasicircle $\Gamma$ that is not chord-arc but such that $ H^{1/2}(\Gamma)\subset\mathcal{H}(\Gamma)$.

\section{Dirichlet spaces over a Jordan domain}
Recall that if  $\Omega$ is a bounded Jordan domain of the complex plane $\mathbb C$, the Dirichlet space $\mathcal D(\Omega)$ is the space of harmonic functions $F$ on $\Omega$ with finite Dirichlet energy $D(F)$, where the energy of any $C^1$ map on $\Omega$ is defined as the $L^2(\Omega)$-norm of the gradient vector $\nabla F(w) = (F_u, F_v) $. Precisely, 
\begin{equation}\label{DD}
    D(F) :=\frac{1}{2\pi}\iint_{\Omega} |\nabla F|^2(w)dudv<+\infty.
\end{equation}
The complex notation is much more convenient. Let us first note 
$F_{\bar w} = (F_u +i F_v)/2$ and $F_w = (F_u - i F_v)/2$.  This gives
$D(F) = 1/\pi\iint_{\Omega} (|F_w|^2 + |F_{\bar w}|^2) dudv$.  The space $\mathcal D(\Omega)$ is a Hilbert space modulo constant functions. Let $\varphi$ map another bounded Jordan domain  $\Omega'$ conformally onto $\Omega$. Using 
$$
|G_z|^2 + |G_{\bar z}|^2 = (|F_w|^2 + |F_{\bar w}|^2)\circ \varphi(z) |\varphi'(z)|^2, 
$$
we see that $F\mapsto G := F\circ\varphi$ is a bijective isometry between $\mathcal D(\Omega)$ and  $\mathcal D(\Omega')$.  Similarly, an anti-conformal mapping $\varphi(\bar z)$ also induces the invariance of Dirichlet energies. 

In the case of classical Dirichlet space $\mathcal D(\mathbb D)$, one may make the theory precise  by the use of Fourier series. For a real-valued function $u \in \mathcal D(\mathbb D)$, let $v$ be the unique harmonic conjugate function of $u$ in $\mathbb D$ with the requirement $v(0) = 0$ so that $\Phi = u + iv$ is holomorphic.  An easy calculation involving Parseval formula leads to, writing $\Phi(z) = \sum_{n=0}^{\infty}a_n z^n$, 
$$
D(\Phi) = \sum_{n=1}^{\infty} n|a_n|^2.
$$
By Cauchy-Riemann equations, $D(u) = D(v)$, and then, $D(u) = \frac{1}{2}\sum_{n=1}^{\infty} n|a_n|^2$.  In particular, $ \sum_{n=1}^{\infty} |a_n|^2<\infty$, which means that  $\Phi\in H^2(\mathbb D)$, the Hardy space of analytic functions on $\mathbb D$, and that the function $u$ belongs to the Hardy space of harmonic functions $h^2(\mathbb D)$. As a consequence, $u$ has angular limits almost everywhere on the unit circle $\mathbb S$. 

Suppose now that $\Omega$ is a domain bounded by a rectifiable Jordan curve $\Gamma$. Let $\varphi$ map  $\mathbb D$ conformally onto $\Omega$. Then $\varphi$ extends to a homeomorphism of the closures $\mathbb D \cup \mathbb S$ and $\Omega\cup\Gamma$. Using F. and M. Riesz theorem  for the curve $\Gamma$ being rectifiable (see [18]), we have that a subset of $\mathbb S$ has measure zero if and only if its image under $\varphi$ has length zero, and it also makes sense to speak of a tangential direction almost everywhere on $\Gamma$. Furthermore, the mapping $\varphi$ preserves angles at almost every boundary point on $\Gamma$, see \cite{dur, pom} for details. Consequently, for any $F \in \mathcal D(\Omega)$, since  $G := F\circ\varphi \in \mathcal D(\mathbb D)$ one may say that $F$ has angular limits $f(w)$ almost everywhere on $\Gamma$ such that $f\circ\varphi \in L^2(\mathbb S)$. 
So $F$ can be recovered from its boundary function by the ``Poisson integral" of $f$, in the sense that 
\begin{equation}\label{Ff}
    F = P(f\circ \varphi)\circ\varphi^{-1}
\end{equation}
where $P$ stands for the classical Poisson integral in the unit disk $\mathbb D$. 
If $f_1$ and $f_2$ are boundary functions of $F_1$ and $F_2$ in $\mathcal{D}(\Omega)$, respectively, and $f_1 = f_2$ almost everywhere on $\Gamma$, we then have $F_1 = F_2$ by \eqref{Ff}. Hence, we may say that $f_1 = f_2$ if they are equal except possibly on a subset with length zero.
The one to one correspondence $F\leftrightarrow f$ allows us to view the Dirichlet space on $\Omega$ as a space of functions defined on $\Gamma$. 
We denote it by $\mathcal{H}(\Gamma, \Omega)$.

Since the function $F$ in \eqref{Ff} is the solution to Poisson's equation $\Delta F = 0$ in the domain $\Omega$ with boundary condition $F|_{\Gamma} = f$, Dirichlet's principle says that $F$ can be obtained as the minimizer of Dirichlet energies $D(V)$ 
amongst all $C^1$ extensions $V$ to $\Omega$ of $f$. We call $\Vert f \Vert_i^2 := D(F)$ the interior energy of $f$. Clearly, $\mathcal{H}(\Gamma, \Omega)$ is the function space consisting of all $f$ with a finite semi-norm $\Vert f \Vert_i$. 

Let us make one remark on Dirichlet's principle: The requirement for $C^1$ extensions can be relaxed. Precisely,  
let $\dot W^{1,2}(\Omega)$ be the homogeneous Sobolev space of locally integrable functions on any domain $\Omega$ of the Riemann sphere with $L^2$-integrable gradient taken in the sense of distributions, equipped with the natural $L^2(\Omega)$-norm of the gradient vector $\nabla F$ as in \eqref{DD}. It has an important subspace $\dot W^{1,2}_0(\Omega)$ defined as the closure in $\dot W^{1,2}(\Omega)$ of $C_0^\infty(\Omega)$, the space of $C^\infty$ functions with compact support included in $\Omega$. By the Meyers-Serrin theorem \cite{MS}, the space $C^\infty(\Omega)\cap \dot W^{1,2}(\Omega)$ is dense in $\dot W^{1,2}(\Omega)$. With this setting, a simple approximation argument shows that
\begin{equation}\label{Diri}
   D(F) = \min\{D(V): V \;\text{ranges over all  functions in}\; \dot W^{1,2}(\Omega) \;\text{with}\;  V-F \in \dot W_0^{1,2}(\Omega)\}. 
\end{equation}

By the Jordan curve theorem the complement of the boundary Jordan curve $\Gamma$ of $\Omega$ has two components: one is $\Omega=\Omega_i$, the so-called interior of $\Gamma$, and the second is $\Omega_e$, the exterior of  $\Gamma$. In order to define a Dirichlet space over  $\Omega_e$, we may first assume without loss of generality that $0 \in \Omega$, and we consider the reflection $\iota(z)=\frac{1}{\bar z}.$ It maps $\Omega_e$ onto a  Jordan domain $\tilde\Omega = \tilde\Omega_i$ bounded by a bounded Jordan curve $\tilde\Gamma$.  The image of $\Omega$ is  $\tilde\Omega_e$, the exterior of $\tilde\Gamma$. One may see that 
the point $0 = \iota(\infty)$ is an interior point of $\tilde\Omega$.  Define $\mathcal{D}(\Omega_e)$ as the set 
$\{F := \tilde F \circ\iota,\,\tilde{F}\in \mathcal{D}(\tilde{\Omega})\}$. 
Then the Dirichlet energy of $F$ over $\Omega_e$ can be written as 
$$ D(F)=\frac{1}{2\pi}\iint_{\Omega_e\backslash{\{\infty\}}}|\nabla F|^2(w)dudv.$$
That is, the point  $\infty$ can be removed from the domain of integration without changing the convergence properties or the value of integral. This is justified as follows:  if $F \in \mathcal{D}(\Omega_e)$ then for any $\epsilon > 0$ there is an $R > 0$ such that 
$$
\iint_{|w| > R} |\nabla F|^2(w) dudv = \iint_{|z| \leq  1/R} |\nabla \tilde F|^2(z) dxdy < \epsilon.
$$
Let $\tilde\varphi$ map $\mathbb D = \iota(\mathbb D_e)$ conformally onto $\tilde\Omega = \iota(\Omega_e)$.  
The one to one correspondence $\tilde F = P(\tilde f\circ \tilde\varphi)\circ \tilde\varphi^{-1} \leftrightarrow \tilde f$ between the elements of $\mathcal{D}(\tilde{\Omega})$ and $\mathcal{H}(\tilde\Gamma, \tilde\Omega)$ leads to the one to one correspondence $F = \tilde F\circ \iota \leftrightarrow f = \tilde f \circ \iota$ between the elements of $\mathcal{D}(\Omega_e)$ and $\mathcal{H}(\Gamma, \Omega_e)$. Here, $\mathcal{H}(\Gamma, \Omega_e)$ is a function space consisting of  all boundary functions $f$ of elements $F$ of $\mathcal{D}(\Omega_e)$ assigned a semi-norm $\Vert f \Vert_e$,  where   $\Vert f \Vert_e^2 := D(F)$. We call $\Vert f \Vert_e^2$ the exterior energy of $f$. 
 
For the simple case $\Omega = \mathbb D$, we notice that $\tilde\Omega = \mathbb D$ by $\iota(z) = z$ on $\mathbb S$. Then,  the identity operator  
$\mathcal{H}(\mathbb S, \mathbb D) \to \mathcal{H}(\mathbb S, \mathbb D_e)$ is an isometric isomorphism with respect to $\Vert \cdot \Vert_{i}$ and $\Vert \cdot \Vert_{e}$.
In the next section, we  investigate the Jordan domain to which this property may be generalized.

\section{Dirichlet spaces over  quasidisks}
In this section we look for a sufficient and necessary condition on the rectifiable Jordan curve $\Gamma$ so that 
the interior and exterior energies on $\Gamma$ are equivalent. Precisely, we will show  the following 
\begin{theo}\label{quasi}
   With the above notation,  the identity operator $\mathcal H(\Gamma, \Omega) \to \mathcal H(\Gamma, \Omega_e)$ is a bounded isomorphism with respect to $\Vert \cdot \Vert_{i}$ and $\Vert \cdot \Vert_{e}$ if and only if $\Gamma$ is a quasicircle.
\end{theo}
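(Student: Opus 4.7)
The strategy is to transfer the problem to the unit circle via Riemann maps and reduce it to two classical structural results. Let $\varphi_i:\mathbb D\to\Omega$ and $\varphi_e:\mathbb D_e\to\Omega_e$ be Riemann maps, normalised so that $\varphi_e(\infty)=\infty$. The conformal invariance of the Dirichlet integral recalled in Section~2 gives $\|f\|_i=\|f\circ\varphi_i\|_{\mathcal H(\mathbb S,\mathbb D)}$ and $\|f\|_e=\|f\circ\varphi_e\|_{\mathcal H(\mathbb S,\mathbb D_e)}$ for every candidate boundary function $f$, and by the final remark of Section~2 the two spaces on $\mathbb S$ are isometrically identified with $H^{1/2}(\mathbb S)$. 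Consequently the theorem is equivalent to the assertion that the composition operator induced by the conformal welding $h:=\varphi_e^{-1}\circ\varphi_i:\mathbb S\to\mathbb S$ is a bounded isomorphism of $H^{1/2}(\mathbb S)$ if and only if $\Gamma$ is a quasicircle.

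For the ``if'' direction, I would assume $\Gamma$ is a quasicircle and invoke Ahlfors' theorem to select a quasiconformal reflection $\lambda:\bar{\mathbb C}\to\bar{\mathbb C}$ fixing $\Gamma$ pointwise and exchanging $\Omega$ with $\Omega_e$. Given $F\in\mathcal D(\Omega)$ with boundary trace $f$, set $V:=F\circ\lambda$ on $\Omega_e$. The standard chain-rule bound for quasiconformal changes of variable puts $V\in\dot W^{1,2}(\Omega_e)$ with $D(V)\leq K(\lambda)\,D(F)$. Since $\lambda|_\Gamma=\mathrm{id}$, the functions $V$ and $F$ share the same trace $f$ on $\Gamma$ (immediate when $F$ is smooth up to the boundary, and then by an approximation argument in the general case). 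The Dirichlet principle \eqref{Diri} applied in $\Omega_e$ then yields $\|f\|_e^2=D(F_e)\leq D(V)\leq K(\lambda)\,\|f\|_i^2$, and the reverse inequality follows from the symmetric argument with $\Omega$ and $\Omega_e$ interchanged.

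For the converse, assume that the identity map is a bounded isomorphism, so that (by the reduction above) composition with $h$ is a bounded isomorphism of $H^{1/2}(\mathbb S)$. I would then apply the Nag--Sullivan theorem (see~\cite{nag}), which states that a sense-preserving self-homeomorphism of $\mathbb S$ induces a bounded composition operator on $H^{1/2}(\mathbb S)$ if and only if it is quasisymmetric. Hence $h$ is quasisymmetric, and the Ahlfors--Beurling welding theorem then identifies $\Gamma$ as a quasicircle. The hardest ingredient is this $H^{1/2}$-characterization of quasisymmetry: the implication QS $\Rightarrow$ bounded composition follows from the same quasiconformal-reflection argument used in the ``if'' direction, but the converse is a genuine harmonic-analytic statement that must either be quoted or redone. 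A secondary technicality is the trace-matching step for $V=F\circ\lambda$ when $F$ is merely in $\mathcal D(\Omega)$, which requires a density argument since a quasiconformal reflection need not preserve non-tangential approach to $\Gamma$ pointwise.
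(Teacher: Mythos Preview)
Your argument is correct. The sufficiency direction is essentially the same as the paper's: you use a quasiconformal reflection together with the Dirichlet principle \eqref{Diri}, while the paper invokes the slightly sharper fact that a quasicircle admits a \emph{bi-Lipschitz} reflection fixing $\Gamma$ pointwise, which makes the energy comparison $D(F\circ\phi)\approx D(F)$ immediate without appealing to the quasiconformal Jacobian bound. Either variant works.

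The necessity direction is where the two approaches diverge. You reduce to $H^{1/2}(\mathbb S)$ and quote Nag--Sullivan (Proposition~\ref{compo}) as a black box to conclude that the welding $h$ is quasisymmetric. The paper instead gives a self-contained argument: it recalls that the conformal module $m(Q)$ of a quadrilateral equals $2\pi$ times the minimal Dirichlet energy over boundary data that are $0$ and $1$ on the two distinguished arcs, applies this on both sides of $\Gamma$ to a single extremal function $f_0$, and uses the assumed equivalence $\|f_0\|_e\approx\|f_0\|_i$ to get $m(h(Q))\approx m(Q)$ directly. This module characterization of quasisymmetry then finishes the proof without ever passing through the full $H^{1/2}$ composition theorem. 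The paper explicitly notes (remark~(i) after the proof) that your route---via Proposition~\ref{compo}---is the one taken by Schippers--Staubach, and that the module argument is presented precisely to give a more direct alternative. So what you gain is brevity at the cost of importing a nontrivial harmonic-analytic result; what the paper gains is a proof whose only external input is the Beurling--Ahlfors extremal-length description of quasisymmetry.
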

Before starting the proof of Theorem \ref{quasi}, we recall some preliminary facts about 
 quasicircles and  quasisymmetric mappings; see \cite{ahl} for additional background. 
A quasicircle is the image of a circle under a quasiconformal mapping of the complex plane $\mathbb C$, and the inner domain of a quasicircle is called a quasidisk. Here, by  quasiconformal mapping $f$ of $\mathbb C$ we mean a homeomorphism $f$ whose  gradient in the sense of distribution belongs to $L^2_{loc}(\mathbb C)$ and satisfies 
$$f_{\bar z}=\mu(z)f_{z}$$
for an essentially uniformly bounded function $\mu\in L^\infty(\mathbb C)$ bounded by some constant $k < 1$. Here, $f$ may also be called $k$-quasiconformal to specify the constant $k$. 

A sense-preserving homeomorphism $h$ of $\mathbb S$ is called a conformal welding of the Jordan curve $\Gamma$ if $h = \psi^{-1}\circ\varphi$ where $\varphi$ and $\psi$ are conformal maps from $\mathbb D$ onto $\Omega$ and from $\mathbb D_e$ onto $\Omega_e$, respectively. So there are many weldings of $\Gamma$ but they differ from each other by compositions with M\"obius transformations of $\mathbb S$. In particular, the conformal welding of a quasicircle is exactly a quasisymmetric homeomorphism of $\mathbb S$, and
the conformal maps $\varphi$ and $\psi$ for a quasicircle  can be  extended to $\mathbb C$ quasiconformally.  
Saying a homeomorphism $h$ of $\mathbb S$  is quasisymmetric means that there exists a  constant $C_1 > 0$   such that
$$
C_1^{-1} \leq \frac{|h(e^{i(\theta + \alpha)}) - h(e^{i\theta} )|}{|h(e^{i\theta}) - h(e^{i(\theta - \alpha)} )|} \leq C_1
$$
for all $\theta \in \mathbb R$ and $-\pi/2 < \alpha \leq \pi/2$. Here, the optimal constant $C_1$ is called the quasisymmetry constant of $h$. A sense-preserving homeomorphism $h$ of $\mathbb S$ is quasisymmetric if and only if $h$ preserves the modules of quadrilaterals quasi-invariantly, namely, there exists a constant $C_2 > 0$ such that for any quadrilateral $Q$ it holds that 
$$C_2^{-1} m(Q) \leq m(h(Q)) \leq C_2 m(Q).$$ 
Moreover,  the optimal constant $C_2$ and the quasisymmetry constant $C_1$ of $h$ only depend on each other. Here, by quadrilateral $Q$ we mean the unit disk $\mathbb D$ together with a pair of disjoint closed arcs  on the boundary 
$\mathbb S$.  It is a well-known fact that for a quadrilateral $Q$ with two disjoint closed arcs $\alpha_1$, $\beta_1$ on $\mathbb S$, its   module $m(Q)$ multiplied by $\frac{1}{2\pi}$ is exactly the minimum of Dirichlet energies $D(P(f))$ of harmonic functions $P(f)$ on $\mathbb D$ ranging over all boundary values $f$ with $f = 0$ on $\alpha_1$ and $f = 1$ on $\beta_1$ (see \cite{BA}). Then, by the definition of $\Vert f \Vert_i^2$,  we have $\frac{1}{2\pi}m(Q) = \min\Vert f \Vert_i^2$.

The above concept of quasisymmetry of $\mathbb S$ onto $\mathbb S$ was introduced by Beurling and Ahlfors \cite{BA}, and later formulated for general metric spaces by Tukia and V\"ais\"al\"a \cite{TV}. For our purpose, we only need to consider the quasisymmetric mapping from a curve $\Gamma_1$ onto the other $\Gamma_2$. Let $h: \Gamma_1\to\Gamma_2$ be a sense-preserving mapping. Let $\eta: [0, +\infty) \to [0, +\infty)$ be an increasing homeomorphism with $\lim_{t \to +\infty}\eta(t) = +\infty$. We say that $h$ is $\eta$-quasisymmetric if for each triple $z_0$, $z_1$, $z_2 \in \Gamma_1$ we have
$$
\frac{|h(z_0) - h(z_1)|}{|h(z_0) - h(z_2)|} \leq \eta \bigg( \frac{|z_0 - z_1|}{|z_0 - z_2|} \bigg).
$$
The more general quasisymmetric mapping  on an open subset of $\mathbb C$ can be defined in the same way. 
It is known that if $h: \mathbb C \to \mathbb C$ is a $k$-quasiconformal homeomorphism of $\mathbb C$ then $h$ is $\eta$-quasisymmetric where $\eta$ depends only on $k$. Conversely,  if $h: D \to \mathbb C$ is an $\eta$-quasisymmetric mapping on a domain $D$ then it is quasiconformal (see e.g. Chapter 3 of \cite{AIM}).

Concerning the quasisymmetric homeomorphism of $\mathbb S$ onto $\mathbb S$, the following result of Nag-Sullivan \cite{nag} is well-known. 
\begin{prop}\label{compo}
    A sense-preserving homeomorphism $h$ of $\mathbb S$ is quasisymmetric if and only if the composition operator $V_h:g\mapsto g\circ h$ gives an isomorphism of $H^{1/2}(\mathbb S)$, that is, $V_h$ and $(V_h)^{-1}$  (or $V_{h^{-1}}$) are bounded linear operators. 
    Here, the operator norm $\Vert V_h \Vert$ depends only on the quasisymmetry constant of $h$.  
\end{prop}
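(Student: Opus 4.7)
The plan is to prove the two directions by essentially independent arguments, both grounded in the interpretation $H^{1/2}(\mathbb{S}) = \mathcal{H}(\mathbb{S},\mathbb{D})$ with the Douglas norm identified with $\|\cdot\|_i$.

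For the forward direction, the starting point is the Beurling--Ahlfors extension theorem: a quasisymmetric homeomorphism $h$ of $\mathbb{S}$ admits a quasiconformal extension $H$ to $\overline{\mathbb{D}}$ whose maximal dilatation $K$ depends only on the quasisymmetry constant $C_1$ of $h$. Given $g\in H^{1/2}(\mathbb{S})$, let $G$ be its harmonic Poisson extension, so that $\|g\|_i^2 = D(G)$. The composition $G\circ H$ belongs to $\dot W^{1,2}(\mathbb{D})$ and has boundary trace $g\circ h$. The chain rule, combined with the pointwise quasiconformal bound $(|H_z|+|H_{\bar z}|)^2 \leq K\,J_H$, gives $|\nabla(G\circ H)|^2 \leq K\,(|\nabla G|^2\circ H)\,J_H$; the change of variables $w = H(z)$ then yields $D(G\circ H) \leq K\,D(G)$. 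Invoking the Dirichlet principle \eqref{Diri} we conclude that $\|V_h g\|_i^2 \leq D(G\circ H) \leq K\|g\|_i^2$. Applying the same reasoning to $h^{-1}$ (which is itself quasisymmetric with a constant depending only on $C_1$) gives boundedness of $V_{h^{-1}}$, so $V_h$ is an isomorphism with operator norm controlled purely by $C_1$.

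For the reverse direction, I will exploit the quadrilateral-modulus characterization recalled just before the proposition, namely $\tfrac{1}{2\pi}m(Q) = \min\|f\|_i^2$ over boundary values $f$ with $f = 0$ on $\alpha_1$ and $f = 1$ on $\beta_1$. Assume $V_h$ has operator norm $M$. For any quadrilateral $Q = (\mathbb{D};\alpha_1,\beta_1)$ and any near-minimizing $f$, the pullback $f\circ h$ is admissible for the quadrilateral $h^{-1}(Q) := (\mathbb{D}; h^{-1}(\alpha_1), h^{-1}(\beta_1))$, whence
$$\tfrac{1}{2\pi}\,m(h^{-1}(Q)) \leq \|f\circ h\|_{H^{1/2}}^2 \leq M^2\|f\|_{H^{1/2}}^2.$$
Taking the infimum over $f$ gives $m(h^{-1}(Q)) \leq M^2 m(Q)$, and the symmetric argument with $V_{h^{-1}}$ in place of $V_h$ yields the reverse inequality. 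Thus $h$ preserves quadrilateral moduli quasi-invariantly with constant depending only on $M$ and $\|V_{h^{-1}}\|$, and the quadrilateral characterization then forces $h$ to be quasisymmetric with controlled constant.

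The main obstacle lies in the forward direction: producing a quasiconformal extension whose dilatation is bounded quantitatively in terms of the quasisymmetry constant (this is exactly the quantitative content of Beurling--Ahlfors), and then tracking constants through the chain-rule distortion estimate so that the final operator norm depends only on $C_1$. Everything else --- the Dirichlet principle reducing $\|V_h g\|_i^2$ to $D(G\circ H)$, and the passage from a bounded composition operator to quasi-invariance of quadrilateral moduli --- is essentially bookkeeping once the right inequalities are in hand.
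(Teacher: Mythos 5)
Your argument is correct, but note that the paper does not actually prove Proposition \ref{compo}: it is quoted as a known theorem of Nag--Sullivan \cite{nag}, so there is no in-paper proof to match against. What you have written is, in effect, a self-contained proof obtained by specializing to the circle the two mechanisms the paper uses to prove Theorem \ref{quasi} (its quasidisk analogue): for the forward direction, quasi-invariance of the Dirichlet integral under quasiconformal composition together with the Dirichlet principle \eqref{Diri} --- the same device as in the sufficiency part of Theorem \ref{quasi} and in comment (iii) of Section 3, except that you use a Beurling--Ahlfors (or Douady--Earle) extension of $h$ where the paper uses a bi-Lipschitz reflection across the quasicircle; and for the reverse direction, the identification $\tfrac{1}{2\pi}m(Q)=\min\Vert f\Vert_i^2$ and the quadrilateral-modulus characterization of quasisymmetry, exactly as in the necessity part of Theorem \ref{quasi}. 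Both steps are sound; the only point stated more casually than it deserves is that $G\circ H-P(g\circ h)\in \dot W^{1,2}_0(\mathbb D)$, i.e.\ that $G\circ H$ lies in the right trace class so that \eqref{Diri} applies, but this is at the same level of rigor as the paper's own use of that principle. Your approach buys a quantitative, extension-based proof in place of a citation; the paper's choice to cite Nag--Sullivan avoids having to invoke the quantitative Beurling--Ahlfors extension at all.
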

\noindent We will also  give its generalization to the quasisymmetry of $\mathbb S$ onto a curve $\Gamma$ as three corollaries of main theorems in the remainder, which can be summarized by 
\begin{prop}
    Let $\Gamma$ be a rectifiable quasicircle. Let $h$ be a sense-preserving homeomorphism of $\mathbb S$ onto $\Gamma$. The composition operator $V_{h} : g \mapsto g\circ h$ is defined on $H^{1/2}(\Gamma)$. Consider the following four statements: 
        \begin{itemize}
        \item[{\rm(a)}] $\Gamma$ is  chord-arc,
        \item[{\rm(b)}] $h$ is quasisymmetric,
        \item[{\rm(c1)}] $(V_{h})^{-1}$ is a bounded linear operator from $H^{1/2}(\mathbb S)$ into $H^{1/2}(\Gamma)$,
        \item[{\rm(c2)}] $V_{h}$ is a bounded linear operator from $H^{1/2}(\Gamma)$ into $H^{1/2}(\mathbb S)$.
         \end{itemize}
         If any two of the above three statements $\rm(a)$,$\rm(b)$,and $\rm(c1)$ hold, then the third one holds true, while $\rm(a)$ and  $\rm(b)$ imply $\rm(c2)$, $\rm(a)$ and  $\rm(c2)$ imply $\rm(b)$, but $\rm(b)$ and  $\rm(c2)$ does not necessarily imply $\rm(a)$.
          \end{prop}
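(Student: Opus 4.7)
The plan is to reduce everything to Proposition~\ref{compo} by factoring $h$ through a boundary Riemann map. Let $\varphi:\mathbb D\to\Omega$ be conformal; since $\Gamma$ is a quasicircle, $\varphi$ extends quasiconformally to $\mathbb C$, so both $\varphi|_{\mathbb S}:\mathbb S\to\Gamma$ and its inverse are $\eta$-quasisymmetric. Setting $k:=\varphi^{-1}\circ h:\mathbb S\to\mathbb S$, one has $h=\varphi\circ k$, whence
\begin{equation*}
V_h \;=\; V_k\circ V_\varphi, \qquad V_{h^{-1}} \;=\; V_{\varphi^{-1}}\circ V_{k^{-1}}.
\end{equation*}
Three translation steps make the key equivalences available. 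First, by conformal invariance of the Dirichlet energy, $V_\varphi:\mathcal H(\Gamma)\to H^{1/2}(\mathbb S)$ is an isometric isomorphism, with inverse $V_{\varphi^{-1}}$. Second, since composition with $\varphi^{\pm 1}$ preserves quasisymmetry, (b) is equivalent to $k$ being quasisymmetric on $\mathbb S$, which by Proposition~\ref{compo} is equivalent to $V_k$ (or $V_{k^{-1}}$) being a bounded automorphism of $H^{1/2}(\mathbb S)$. Third, from the main theorem of Section~4 and the first part of Section~5, (a) is equivalent to the identity $\mathcal H(\Gamma)\to H^{1/2}(\Gamma)$ being a bounded isomorphism.

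With these three equivalences in hand, the six positive implications become elementary compositional bookkeeping. The implications $\rm(a){+}(b)\Rightarrow(c1),(c2)$ follow by reading off $V_{h^{-1}}=V_{\varphi^{-1}}\circ V_{k^{-1}}$ (resp.\ $V_h=V_k\circ V_\varphi$) as a composition of bounded maps. For $\rm(a){+}(c1)\Rightarrow(b)$ and $\rm(a){+}(c2)\Rightarrow(b)$, I would solve for $V_{k^{-1}}=V_\varphi\circ V_{h^{-1}}$ (resp.\ $V_k=V_h\circ V_{\varphi^{-1}}$); both factors on the right are bounded by the chord-arc hypothesis, so Nag-Sullivan returns (b). The most substantive implication is $\rm(b){+}(c1)\Rightarrow(a)$: from $V_{\varphi^{-1}}=V_{h^{-1}}\circ V_k$ one sees that $V_{\varphi^{-1}}:H^{1/2}(\mathbb S)\to H^{1/2}(\Gamma)$ is bounded; since $V_{\varphi^{-1}}$ is also an isometry onto $\mathcal H(\Gamma)$, this is exactly the statement that $\mathcal H(\Gamma)\hookrightarrow H^{1/2}(\Gamma)$ continuously, and by the sharpness result of Section~5 this forces $\Gamma$ to be chord-arc.

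For the negative assertion $\rm(b){+}(c2)\not\Rightarrow(a)$ I would invoke the curve constructed in the second part of Section~5: a rectifiable quasicircle $\Gamma_0$ that is \emph{not} chord-arc, yet satisfies $H^{1/2}(\Gamma_0)\subset \mathcal H(\Gamma_0)$ with continuous inclusion. Take $h=\varphi|_{\mathbb S}$, so that (b) holds automatically; then (c2) reduces to the composition $H^{1/2}(\Gamma_0)\hookrightarrow \mathcal H(\Gamma_0)\xrightarrow{V_\varphi} H^{1/2}(\mathbb S)$, which is bounded by hypothesis, while (a) fails by construction.

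The hard part is not in this argument but in the statements being packaged: the identification $\mathcal H(\Gamma)=H^{1/2}(\Gamma)$ for chord-arc $\Gamma$, its converse for rectifiable quasicircles, and the explicit construction of $\Gamma_0$. Once those three inputs are granted, the proposition is a formal manipulation of composition operators through the isometry $V_\varphi$ and the classical Nag-Sullivan theorem on $\mathbb S$.
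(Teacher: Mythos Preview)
Your proposal is correct and follows essentially the same route as the paper: factor $h=\varphi\circ k$ through the Riemann map, use the isometry $V_\varphi:\mathcal H(\Gamma)\to H^{1/2}(\mathbb S)$, invoke Nag--Sullivan (Proposition~\ref{compo}) for the circle factor $k$, and appeal to Theorem~\ref{appro} and Theorem~\ref{rec} for the $\mathcal H(\Gamma)\leftrightarrow H^{1/2}(\Gamma)$ comparison together with the non-Smirnov counterexample. The paper packages the implications as Corollaries~\ref{Cor1}--\ref{Cor3} rather than in one block, but the arguments coincide; the only minor point to make explicit is that in $\rm(a){+}(c1)\Rightarrow(b)$ and $\rm(a){+}(c2)\Rightarrow(b)$ you are using the one-sided form of Nag--Sullivan (boundedness of $V_{k^{\pm1}}$ alone forces $k$ quasisymmetric), which follows from the module argument via the conjugate-quadrilateral trick.
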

\noindent Actually one can view $\rm(a,c1)\Rightarrow \rm(b)$ and $\rm(a,c2)\Rightarrow \rm(b)$ are due to Proposition \ref{compo}. The complete version of $\rm(a,b)\Rightarrow \rm(c1,c2)$ is Corollary \ref{Cor1}. 
$\rm(b,c1)\Rightarrow \rm(a)$ and $\rm(b,c2)\nRightarrow \rm(a)$ are just Corollary \ref{Cor2} and  Corollary \ref{Cor3}, respectively.

We are now ready to prove Theorem \ref{quasi}.
\begin{proof}
If $\Gamma$ is a quasicircle, then there is a bi-Lipschitz map $\phi$ that fixes $\Gamma$ pointwise and exchanges the two complementary components $\Omega$ and $\Omega_e$ of $\Gamma$, see \cite{ahl}. For any  $f \in \mathcal{H}(\Gamma, \Omega)$, recall that $F = P(f\circ\varphi)\circ\varphi^{-1}$ extends $f$ to $\Omega$, minimizing  Dirichlet energies for the boundary function $f$. 
Then $F\circ\phi$ extends $f$ to $\Omega_e$ and 
$$
\frac{1}{2\pi}\iint_{\Omega_e\backslash \{\infty\}} |\nabla (F\circ\phi)|^2(w)dudv \approx \frac{1}{2\pi}\iint_{\Omega} |\nabla F|^2(z)dxdy = \Vert f \Vert_i^2
$$
where the implied constants depend only on the bi-Lipschitz constant of $\phi$. We conclude by the definition of $\Vert f \Vert_e^2$ and \eqref{Diri} that 
$$
\Vert f \Vert_e^2 \leq \frac{1}{2\pi}\iint_{\Omega_e\backslash \{\infty\}} |\nabla (F\circ\phi)|^2(w)dudv 
$$
and thus  $\Vert f \Vert_e \lesssim \Vert f \Vert_i$. The roles of $\Omega$ and $\Omega_e$ can be switched in the above argument, so that we also have  $\Vert f \Vert_i \lesssim \Vert f \Vert_e$ for any $f \in \mathcal{H}(\Gamma, \Omega_e)$. This gives a  proof of sufficiency of Theorem \ref{quasi}.

Conversely, let $\varphi$ map $\mathbb D$ conformally onto $\Omega$ with the normalizations $\varphi(0) = 0$ and $\varphi'(0) = 1$, and  $\tilde\varphi$ map $\mathbb D = \iota(\mathbb D_e)$ conformally onto $\tilde\Omega = \iota(\Omega_e)$ with  $\tilde\varphi(0) = 0$,  $\tilde\varphi'(0) = 1$.  We denote $\iota\circ\tilde\varphi\circ\iota$ by $\psi$ that  maps $\mathbb D_e$ onto $\Omega_e$. Then, these three maps are homeomorphisms of the closures. For any $f \in \mathcal H(\Gamma, \Omega)$, we see that $f\circ\iota$ denoted by  $\tilde f$ is defined on $\tilde\Gamma$. See the following commutative diagram for a picturesque description of these maps.  According to it we see  
\begin{align*}
    \tilde f\circ\tilde{\varphi}&=(f\circ\iota)\circ(\iota\circ\psi\circ\iota)\\
    &=f\circ\psi\circ\iota
     =f\circ\varphi\circ(\varphi^{-1}\circ\psi)\circ\iota \;\\
     & =f\circ\varphi\circ(\varphi^{-1}\circ\psi).
\end{align*}
Set $\psi^{-1}\circ\varphi = h$ so that 
\begin{equation}\label{fh}
    f\circ\varphi = \tilde f\circ\tilde{\varphi}\circ h.
\end{equation}
Denote the quadrilateral with two disjoint closed arcs $\alpha_1$, $\beta_1$ on $\mathbb S$ by $Q$, and denote the image of $Q$ under $h$ by $Q'$, the quadrilateral with two disjoint closed arcs $\alpha_2 := h(\alpha_1)$ and $\beta_2 := h(\beta_1)$. 
 
\begin{figure}
    \centering
    \includegraphics[width=12cm]{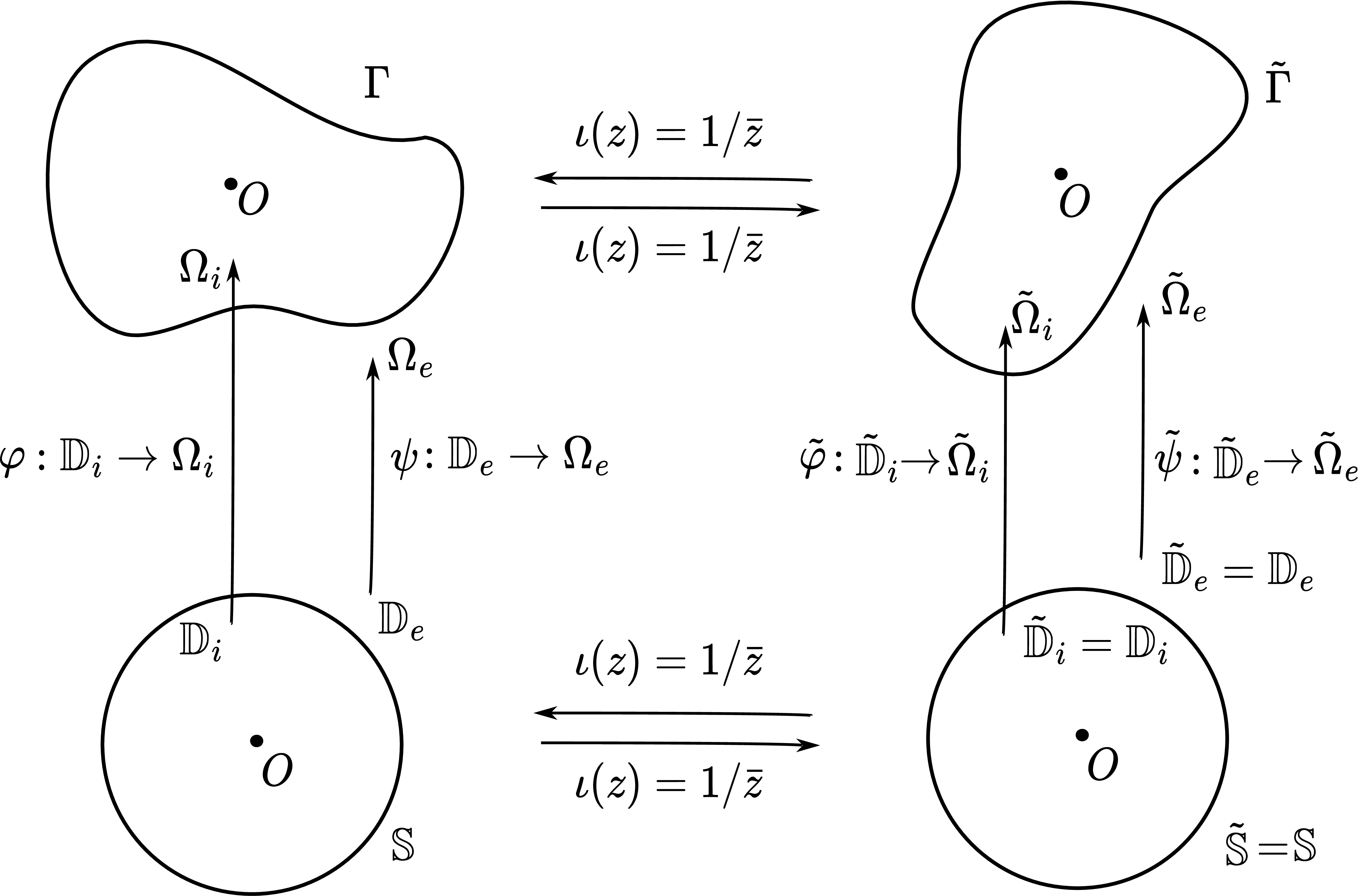}
\end{figure}

Take $f \in \mathcal{H}(\Gamma, \Omega)$ so that $f\circ\varphi$ is given only on part of $\mathbb S$: $f\circ\varphi = 0$ on $\alpha_1$ and $f\circ\varphi = 1$ on $\beta_1$. By \eqref{fh} we see that $\tilde f\circ\tilde{\varphi} = 0$ on $\alpha_2$ and $\tilde f\circ\tilde{\varphi} = 1$ on $\beta_2$. Then, we have $\frac{1}{2\pi}m(Q) = \min\Vert f\circ\varphi \Vert_i^2 = \min\Vert f \Vert_i^2$ ranging over all desired boundary functions $f$. Moreover, the minimum value is attained on the function $f_0\circ\varphi$ which is $0, 1$ on $\alpha_1$, $\beta_1$ and whose normal derivative vanishes on the complementary arcs (see \cite{BA}), that is, $\frac{1}{2\pi}m(Q) = \Vert f_0 \Vert_i^2$. Similarly, we have
$$
\frac{1}{2\pi}m(Q') = \min\Vert \tilde f\circ\tilde\varphi \Vert_i^2 = \min D(P(\tilde f\circ\tilde\varphi)\circ\tilde\varphi^{-1}\circ\iota ) = \min\Vert f \Vert_e^2 \leq \Vert f_0 \Vert_e^2.
$$
By the isomorphism of the identity operator $\mathcal H(\Gamma, \Omega) \to \mathcal H(\Gamma, \Omega_e)$, we have that $\Vert f_0 \Vert_e^2 \approx \Vert f_0 \Vert_i^2$, and thus $m(Q') \lesssim m(Q)$. The above reasoning clearly implies the other inequality $m(Q) \lesssim m(Q')$ by exchanging the roles of $Q$ and $Q'$. Then from $m(Q') \approx m(Q)$  it follows that $h$ is quasisymmetric, and thus $\Gamma$ is a quasicircle. 
\end{proof}

Theorem \ref{quasi} implies that $\Gamma$ is a quasicircle if and only if $\mathcal{H}(\Gamma, \Omega) = \mathcal{H}(\Gamma, \Omega_e)$. Now we define a ``two-sided" space $\mathcal{H}(\Gamma)$ to be $\mathcal{H}(\Gamma) =  \mathcal{H}(\Gamma, \Omega) = \mathcal{H}(\Gamma, \Omega_e)$. This is regarded as a Banach space consisting of all functions $f$ whose harmonic extensions $F$ to $\Omega$ have finite Dirichlet energies, where $f$ is assigned a norm $\Vert f \Vert_{\mathcal{H}(\Gamma)} := \sqrt{D(F)}$ by ignoring the difference of complex constant functions. Moreover, $\Vert f\circ\varphi\Vert_{H^{1/2}(\mathbb S)}^2$ is, by the Douglas formula, equal to $D(F)$, so we also have  
\begin{equation}\label{norm}
    \Vert f \Vert_{\mathcal{H}(\Gamma)}  = \Vert f\circ\varphi\Vert_{H^{1/2}(\mathbb S)}.
\end{equation}

We end this section with three comments about its content:
\begin{enumerate}
\item[(i)] 
As we have learned from one of the referees,  Theorem \ref{quasi} has been proved by Schippers-Staubach \cite{sc1} (also see \cite{sc2}) with the use of Proposition \ref{compo}, even without the assumption of rectifiability,   and the space $\mathcal{H}(\Gamma)$ has already been constructed by them. 
For the sake of completeness,  we  give a new proof of Theorem \ref{quasi} above taking a more direct approach for the rectifiable case. 
    \item[(ii)] 
    Let $\varphi$ be a quasiconformal mapping of $\mathbb C$  and $\Omega = \varphi(\mathbb D)$.  Gol'dshtein et al. (\cite{gol}, see also \cite{Jo}) have noticed that if $F\in \mathcal{D}(\Omega)$ then $F$ extended by 
\begin{equation}\label{Go}
    z\mapsto F\circ\varphi\Bigg(\frac{1}{\overline{\varphi^{-1}(z)}}\Bigg)
\end{equation}
 belongs to the homogeneous Sobolev space 
 $\dot W^{1,2}(\mathbb{C})$. 
 If now $g\in C_0^\infty(\mathbb C)$, the set of infinitely smooth functions in $\mathbb C$ with compact support,  we clearly have
 $$ \iint_\Omega|\nabla g|^2(z) dxdy\leq \iint_\mathbb{C}|\nabla g|^2(z) dxdy < +\infty,$$ and in particular $g|_\Gamma\in \mathcal{H}(\Gamma)$ by Dirichlet's  principle. Let $f\in \mathcal{H}(\Gamma)$ and $F$ its extension to $\mathbb{C}$ of the form \eqref{Go} as a function in $\dot W^{1,2}(\mathbb{C})$. There is a sequence of functions $F_n$ in  $C_0^\infty(\mathbb C)$  converging to $F$ in $\dot W^{1,2}(\mathbb{C})$ (see e.g. Chapter 11 of \cite{Leo}); using Dirichlet's principle again, we may conclude that the space of restrictions to $\Gamma$ of $C_0^\infty(\mathbb C)$  is dense in $\mathcal{H}(\Gamma)$.
 \item[(iii)] 
 Suppose $\varphi$ maps a quasidisk $\Omega'$ onto another one $\Omega$ conformally. It is easy to see that $f \mapsto f\circ\varphi$ is an isometric isomorphism from $\mathcal{H}(\Gamma)$ onto $\mathcal{H}(\Gamma')$. We now claim that if $\varphi$ is $k$-quasiconformal then $f \mapsto f\circ\varphi$ is still  a bounded   isomorphism from $\mathcal{H}(\Gamma)$ onto $\mathcal{H}(\Gamma')$. It is a consequence of quasiconformality and the change of variable formula. Specifically, if $F$ is the harmonic extension to $\Omega$ of $f \in \mathcal{H}(\Gamma)$ as in \eqref{Ff} then $D(F\circ\varphi)\lesssim D(F)$, where the implied constant only depends on $k$. By  \eqref{Diri} and Theoem \ref{quasi}, we conclude that 
 $$\Vert f\circ\varphi\Vert_{\mathcal{H}(\Gamma')}^2 \leq D(F\circ\varphi)\lesssim  D(F) = \Vert f\Vert_{\mathcal{H}(\Gamma)}^2.$$ 
 Combined with the quasiconformality of $\varphi^{-1}$, the above argument actually   implies the double inequality $\Vert f\circ\varphi\Vert_{\mathcal{H}(\Gamma')} \approx \Vert f\Vert_{\mathcal{H}(\Gamma)}$.
\end{enumerate}

\section{Chord-arc curves}
In this section we establish the equivalence between the Banach spaces $\mathcal{H}(\Gamma)$ and $H^{1/2}(\Gamma)$ when the curve $\Gamma$ is chord-arc. Let us start with a geometric description of  chord-arc curves. 
\begin{df}
A rectifiable Jordan curve $\Gamma$ is said to be a chord-arc curve (or $K$-chord-arc curve) if there exists a (least) positive constant $K$, called the chord-arc constant, such that 
$$
\mathrm{length} (\Gamma(z_1, z_2)) \leq K |z_1 - z_2|, \quad \text{for all}\; z_1, z_2 \in \Gamma
$$
where $\Gamma (z_1, z_2)$ is the shorter arc of $\Gamma$ between $z_1$ and $z_2$.
\end{df}
\noindent Chord-arc curves are also called ``Lavrentiev curves".  The inner domain of a chord-arc curve is called a chord-arc domain. A chord-arc curve is  the image of a circle under a bi-Lipschitz homeomorphism of $\mathbb C$. A sense-preserving bi-Lipschitz map of $\mathbb C$ onto $\mathbb C$ is quasiconformal but not vice versa. 
Hence, a chord-arc curve  must be a quasicircle. 
 Indeed, it is exactly a quasicircle with `` regularity property" that will be explored in detail in the next section. 
 In this section, We assume without loss of generality that $\text{length}(\Gamma) = 2\pi$. 

 Now we can state the main result of this section. 
\begin{theo}\label{appro}
The identity operator $\mathcal{H}(\Gamma) \to H^{1/2}(\Gamma)$ is a bounded isomorphism with respect to $\Vert\cdot\Vert_{\mathcal{H}(\Gamma)}$ and $\Vert\cdot\Vert_{H^{1/2}(\Gamma)}$  if the curve  $\Gamma$ is chord-arc.
\end{theo}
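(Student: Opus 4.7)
My plan is to bridge $\|f\|_{H^{1/2}(\Gamma)}$ and $\|f\|_{\mathcal{H}(\Gamma)}$ by inserting an intermediate norm based on the arc-length parameterization. Rescaling so that $\mathrm{length}(\Gamma) = 2\pi$, let $\gamma : \mathbb{S} \to \Gamma$ be the arc-length parameterization, and let $\varphi : \mathbb{D} \to \Omega$ be a conformal map, so by \eqref{norm} we have $\|f\|_{\mathcal{H}(\Gamma)} = \|f \circ \varphi\|_{H^{1/2}(\mathbb{S})}$. Introducing $h := \varphi^{-1} \circ \gamma : \mathbb{S} \to \mathbb{S}$, I intend to establish the chain
\[
\|f\|_{H^{1/2}(\Gamma)} \;\approx\; \|f \circ \gamma\|_{H^{1/2}(\mathbb{S})} \;\approx\; \|f \circ \varphi\|_{H^{1/2}(\mathbb{S})} \;=\; \|f\|_{\mathcal{H}(\Gamma)}.
\]

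\textbf{First equivalence.} The chord-arc condition immediately gives that $\gamma$ is bi-Lipschitz from $\mathbb{S}$ onto $\Gamma$ with constants depending only on the chord-arc constant $K$: for $w_j = e^{it_j}$, the shorter-arc length on $\Gamma$ between $\gamma(w_1)$ and $\gamma(w_2)$ equals the shorter-arc length on $\mathbb{S}$, which is itself comparable to $|w_1-w_2|$, and combining this with the two-sided comparison $|z_1-z_2| \le \mathrm{length}(\Gamma(z_1,z_2)) \le K|z_1-z_2|$ on $\Gamma$ yields $|\gamma(w_1)-\gamma(w_2)| \approx |w_1-w_2|$. The first equivalence then follows by a straightforward change of variables $z_j=\gamma(w_j)$ in the defining double integral for $\|f\|_{H^{1/2}(\Gamma)}^2$, using that $\gamma$ preserves arc length, i.e.\ $|dz_j|=|dw_j|$.

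\textbf{Second equivalence.} Since $f\circ\gamma = (f\circ\varphi)\circ h$, the composition operator $V_h$ of Proposition \ref{compo} will deliver the second equivalence, provided $h$ is a quasisymmetric homeomorphism of $\mathbb{S}$. To verify this I would argue that (i) being bi-Lipschitz, $\gamma$ is $\eta$-quasisymmetric in the metric sense of Tukia--V\"ais\"al\"a; (ii) since a chord-arc curve is in particular a quasicircle, $\varphi$ extends to a quasiconformal homeomorphism of $\mathbb{C}$ (as recalled before Proposition \ref{compo}), hence $\varphi^{-1}:\Gamma\to\mathbb{S}$ is $\eta'$-quasisymmetric. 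Because quasisymmetry is stable under composition, $h=\varphi^{-1}\circ\gamma$ is an $\eta''$-quasisymmetric self-map of $\mathbb{S}$, which coincides with the Beurling--Ahlfors notion appearing in Proposition \ref{compo}. An appeal to that proposition then yields $\|f\circ\gamma\|_{H^{1/2}(\mathbb{S})} \approx \|f\circ\varphi\|_{H^{1/2}(\mathbb{S})}$, completing the chain.

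\textbf{Main obstacle.} The technical core of the argument really lies in the first step; the second is a clean composition of standard ingredients once the bi-Lipschitz nature of $\gamma$ is observed. The one point that requires care is ensuring the bi-Lipschitz bound holds uniformly over all pairs of points on $\mathbb{S}$, including nearly antipodal ones, which amounts to checking that the chord-arc constant controls the shorter-arc to chord ratio on every scale and that the shorter-arc length on $\mathbb{S}$ is globally comparable to the Euclidean chord distance. Beyond this, all other inputs (the Nag--Sullivan theorem, the quasiconformal extension of $\varphi$, and \eqref{norm}) have already been assembled in the preceding sections.
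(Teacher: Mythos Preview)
Your proposal is correct and follows essentially the same route as the paper: the paper also normalizes to $\mathrm{length}(\Gamma)=2\pi$, proves that the arc-length parametrization $z:\mathbb{S}\to\Gamma$ is bi-Lipschitz (this is Lemma~\ref{sin}, which handles precisely the ``nearly antipodal'' issue you flag), and then uses Proposition~\ref{compo} applied to the quasisymmetric self-map $z^{-1}\circ\varphi$ of $\mathbb{S}$ (the inverse of your $h$) together with \eqref{norm} to close the chain. The only cosmetic difference is that the paper states the quasisymmetry of $\varphi|_{\mathbb S}:\mathbb S\to\Gamma$ directly rather than passing through $\varphi^{-1}$, but the content is the same.
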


Before proceeding the proof of Theorem \ref{appro}, we point out the following basic observation.
 
 \begin{lem}\label{sin}
Let $\Gamma$ be a rectifiable Jordan curve  with $\text{length}(\Gamma) = 2\pi$. Set $z(s)$, $0 \leq s < 2\pi$, to be an arc-length parametrization of $\Gamma$. Then it holds that, for any $s,t\in[0,2\pi)$,
\begin{equation}
\frac{\pi}{2}\vert e^{it} - e^{is}\vert = \pi\left\vert\sin{\frac{t-s}{2}}\right\vert \ge |z(t) - z(s)|. \label{ineqref1}
\end{equation}
Moreover, if $\Gamma$ is $K$-chord-arc, then
\begin{equation}\label{ineqref2}
  \frac{1}{K} \vert e^{it} - e^{is}\vert = \frac{2}{K} \left\vert\sin{\frac{t-s}{2}}\right\vert \leq |z(t) - z(s)|.
\end{equation}
 \end{lem}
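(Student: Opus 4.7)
The identity $|e^{it}-e^{is}| = 2|\sin\frac{t-s}{2}|$ is immediate, so both parts reduce to comparing the chord $|z(t)-z(s)|$ with $|\sin\frac{t-s}{2}|$. My plan is to insert the arc-length $\ell(s,t) := \min(|t-s|,\,2\pi-|t-s|)$ between the two quantities and then reduce everything to standard trigonometric inequalities for $\sin\theta$ on $[0,\pi/2]$.

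For the upper bound \eqref{ineqref1}, I would first observe that since $z(\cdot)$ is an arc-length parametrization, the chord cannot exceed either of the two boundary arcs of $\Gamma$ joining $z(s)$ and $z(t)$, so $|z(t)-z(s)| \le \ell(s,t)$. It then suffices to prove $\ell(s,t) \le \pi|\sin\frac{t-s}{2}|$. Setting $\theta = \frac{t-s}{2}$, by symmetry I may assume $\theta \in [0,\pi/2]$, in which case $\ell(s,t) = 2\theta$ and the inequality becomes $2\theta \le \pi\sin\theta$, i.e.\ $\sin\theta \ge \frac{2\theta}{\pi}$, which is Jordan's inequality. The case $\theta\in[\pi/2,\pi]$ reduces to the previous one by substituting $\psi = \pi-\theta$, since both $\ell$ and $|\sin\theta|$ are invariant under $\theta\mapsto \pi-\theta$.

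For the lower bound \eqref{ineqref2}, the $K$-chord-arc hypothesis gives $\ell(s,t) \le K|z(t)-z(s)|$, so it remains to prove $\ell(s,t) \ge 2|\sin\frac{t-s}{2}|$. With $\theta=\frac{t-s}{2}$ and again $\theta\in[0,\pi/2]$ by symmetry, this becomes $\theta \ge \sin\theta$, which is elementary. The range $\theta\in[\pi/2,\pi]$ is handled by the same substitution $\psi=\pi-\theta$ as above.

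There is no real obstacle; the only point worth care is keeping track of which of the two arcs joining $z(s)$ and $z(t)$ is the shorter one, since the chord-arc condition is phrased with the shorter arc. In both directions, the symmetry $\theta\mapsto\pi-\theta$ of $\ell(s,t)$ and $|\sin\theta|$ lets me avoid a separate analysis of the long-arc case, and the two inequalities $\sin\theta\ge\frac{2\theta}{\pi}$ and $\sin\theta\le\theta$ on $[0,\pi/2]$ furnish the exact constants $\pi/2$ and $1$ appearing in the statement.
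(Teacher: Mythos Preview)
Your proof is correct and follows essentially the same route as the paper: both arguments insert the shorter arc length $\min(|t-s|,\,2\pi-|t-s|)$ between the chord and $|\sin\frac{t-s}{2}|$, invoke Jordan's inequality $\sin\theta\ge \frac{2\theta}{\pi}$ for \eqref{ineqref1} and $\sin\theta\le\theta$ for \eqref{ineqref2}, and handle the range $|t-s|>\pi$ by the reflection $\theta\mapsto\pi-\theta$ (which the paper writes out as an explicit second case). The only cosmetic difference is that you name $\ell(s,t)$ and state the symmetry once, whereas the paper repeats the two cases in full.
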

 
\begin{proof} 
If $|t - s|\leq\pi$, it is easy to see that 
$$
\Big\vert\sin{\frac{t-s}{2}}\Big\vert \geq \frac{2}{\pi}\Big\vert\frac{t-s}{2}\Big\vert \geq \frac{1}{\pi}|z(t) - z(s)|.
$$
If $\pi < |t - s| < 2\pi$, we may also see that
$$
\Big\vert\sin{\frac{t-s}{2}}\Big\vert = \sin{\frac{2\pi - |t - s|}{2}} \geq \frac{2}{\pi}\cdot\frac{2\pi - |t - s|}{2} \geq \frac{1}{\pi}|z(t) - z(s)|.
$$
This proves the inequality \eqref{ineqref1}. 

Further, with the use of the chord-arc condition one may see that if $|t - s|\leq\pi$, 
$$
\Big\vert\sin{\frac{t-s}{2}}\Big\vert \leq \frac{1}{2}|t - s| \leq \frac{K}{2}|z(t) - z(s)|,
$$
and if $\pi < |t - s| < 2\pi$, 
$$
\Big\vert\sin{\frac{t-s}{2}}\Big\vert \leq \frac{1}{2}(2\pi - |t - s|) \leq \frac{K}{2}|z(t) - z(s)|.
$$
This gives a proof of the inequality \eqref{ineqref2}. 
\end{proof}

\begin{proof}[Proof of Theorem \ref{appro}]
    Suppose that $\Gamma$ is a $K$-chord-arc curve with $\text{length}(\Gamma) = 2\pi$. We notice by Lemma \ref{sin} that its arc-length parametrization $z(e^{is})$, $0 \leq s < 2\pi$, satisfies  
 \begin{equation}\label{double}
   \frac{1}{K}\vert e^{it} - e^{is}\vert \leq \vert z(e^{it}) - z(e^{is})\vert \leq \frac{\pi}{2}\vert e^{it} - e^{is}\vert
 \end{equation}   
for any $s,t\in[0,2\pi)$.  
Then, we see that $z$ is a bi-Lipschitz embedding of $\mathbb S$ into $\mathbb C$. 
Recall that $\varphi$ is a conformal map of $\mathbb D$ onto the chord-arc domain $\Omega$, and a homeomorphism of closures $\mathbb D\cup\mathbb S$ onto $\Omega\cup\Gamma$. It follows that $\varphi$ restricted to $\mathbb S$ is a quasisymmetric mapping of $\mathbb S$ onto $\Gamma$. As a consequence, $z^{-1}\circ\varphi$ is a quasisymmetry of $\mathbb S$. 
Note that 
$$
\|f\|_{H^{1/2}(\Gamma)}^2  = \frac{1}{4\pi^2}\int_{0}^{2\pi}\int_{0}^{2\pi}\Big\vert \frac{f(z(e^{it})) - f(z(e^{is}))}{z(e^{it}) - z(e^{is})} \Big\vert^2 dtds
$$
and 
$$
\|f\circ z\|_{H^{1/2}(\mathbb S)}^2 =\frac{1}{4\pi^2} \int_{0}^{2\pi}\int_{0}^{2\pi}\Big\vert \frac{f(z(e^{it})) - f(z(e^{is}))}{e^{it} - e^{is}} \Big\vert^2 dtds.
$$
Using \eqref{double} gives 
$$ \frac{4}{\pi^2}\|f\circ z\|_{H^{1/2}(\mathbb S)}^2 \leq \|f\|_{H^{1/2}(\Gamma)}^2 \leq K^2 \|f\circ z\|_{H^{1/2}(\mathbb S)}^2.$$
By Proposition \ref{compo}, the quasisymmetry of $z^{-1}\circ\varphi$ implies 
$$
\|f\circ \varphi\|_{H^{1/2}(\mathbb S)} = \|f\circ z \circ (z^{-1}\circ\varphi)\|_{H^{1/2}(\mathbb S)}\approx \|f\circ z\|_{H^{1/2}(\mathbb S)}.
$$
Then, we conclude that 
\begin{equation}\label{h}
    \|f\circ \varphi\|_{H^{1/2}(\mathbb S)} \approx \|f\|_{H^{1/2}(\Gamma)}. 
\end{equation}
It follows from \eqref{norm} that  $\|f\|_{\mathcal{H}(\Gamma)} \approx \|f\|_{H^{1/2}(\Gamma)}.$ This completes the proof of Theorem \ref{appro}. 
\end{proof}

We noticed recently that the corresponding result to Theorem \ref{appro} for the critical Besov space has been obtained in \cite{LS} via different reasoning. 
By checking this proof we can see that \eqref{h} is still valid when replacing the conformal mapping $\varphi$ by any quasisymmetric mapping $h$ of $\mathbb S$ onto $\Gamma$, and thus we have the following
\begin{cor}\label{Cor1}
    Let $h$ be a quasisymmetric mapping of $\mathbb S$ onto a chord-arc curve $\Gamma$. Then the composition operator $V_{h}: f \mapsto f\circ h$ gives a bounded isomorphism of $H^{1/2}(\Gamma)$ onto $H^{1/2}(\mathbb S)$.
\end{cor}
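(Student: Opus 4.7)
The plan is to mimic the proof of Theorem \ref{appro} almost verbatim, exploiting the observation that the conformal map $\varphi$ there entered the argument only through the fact that its boundary correspondence is a quasisymmetric homeomorphism of $\mathbb{S}$ onto $\Gamma$. Since the corollary assumes $h$ is already quasisymmetric from $\mathbb{S}$ onto $\Gamma$, it can step into the role played by $\varphi|_{\mathbb{S}}$.

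First, I would reuse the arc-length parametrization $z: \mathbb{S}\to\Gamma$ of the chord-arc curve $\Gamma$ (normalizing $\text{length}(\Gamma)=2\pi$). By Lemma \ref{sin}, $z$ is a bi-Lipschitz embedding, so the direct comparison of the defining double integrals yields
$$\frac{4}{\pi^2}\,\|f\circ z\|_{H^{1/2}(\mathbb{S})}^2 \leq \|f\|_{H^{1/2}(\Gamma)}^2 \leq K^2\,\|f\circ z\|_{H^{1/2}(\mathbb{S})}^2,$$
exactly as in the proof of Theorem \ref{appro}. This reduces matters to controlling $\|f\circ h\|_{H^{1/2}(\mathbb{S})}$ in terms of $\|f\circ z\|_{H^{1/2}(\mathbb{S})}$.

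Next I would observe that $z^{-1}\circ h$ is a quasisymmetric self-homeomorphism of $\mathbb{S}$. Indeed, $h$ is $\eta$-quasisymmetric by hypothesis, $z^{-1}$ is bi-Lipschitz (hence quasisymmetric with a distortion function depending only on $K$), and composition of quasisymmetric maps is quasisymmetric with a control on the distortion function depending only on those of the factors. Then by Proposition \ref{compo} (Nag--Sullivan), the composition operator $g\mapsto g\circ(z^{-1}\circ h)$ is a bounded isomorphism of $H^{1/2}(\mathbb{S})$ onto itself. Specializing to $g=f\circ z$ and using $f\circ z\circ(z^{-1}\circ h)=f\circ h$ gives
$$\|f\circ h\|_{H^{1/2}(\mathbb{S})}\approx\|f\circ z\|_{H^{1/2}(\mathbb{S})}\approx\|f\|_{H^{1/2}(\Gamma)},$$
so $V_h$ and $V_h^{-1}=V_{h^{-1}}$ are both bounded, establishing the desired isomorphism.

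There is no real obstacle here: the corollary is essentially a repackaging of the argument for Theorem \ref{appro}, the only point to verify carefully being the quantitative quasisymmetry of $z^{-1}\circ h$, which amounts to writing down the composition distortion estimate from the bi-Lipschitz bounds \eqref{double} and the $\eta$-quasisymmetry of $h$. Since the paper emphasizes that the proof of Theorem \ref{appro} already goes through with $\varphi$ replaced by any quasisymmetric $h$, one could alternatively present the corollary simply as a remark extracted from that proof, but the route above makes the role of Proposition \ref{compo} transparent.
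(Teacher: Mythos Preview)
Your proposal is correct and follows essentially the same approach as the paper: the paper simply remarks that in the proof of Theorem \ref{appro} the conformal map $\varphi$ enters only through the quasisymmetry of $\varphi|_{\mathbb S}$, so \eqref{h} remains valid with any quasisymmetric $h$ in place of $\varphi$. Your write-up spells out the one extra verification the paper leaves implicit, namely that $z^{-1}\circ h$ is quasisymmetric because $z$ is bi-Lipschitz and $h$ is quasisymmetric, before invoking Proposition \ref{compo}.
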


\section{About the necessity of the chord-arc condition}
 In the last section (i.e. Theorem \ref{appro}) we gave a proof of the fact that if $\Gamma$ is a chord-arc curve then $\mathcal{H}(\Gamma)=H^{1/2}(\Gamma)$. In other words we have proven the existence of a constant $C>0$ such that the two following inequalities hold for chord-arc curves:
 \begin{itemize}
\item[(a)]$\|\cdot\|_{H^{1/2}(\Gamma)}\le C\|\cdot\|_{\mathcal{H}(\Gamma)}$.
\item[(b)]$\|\cdot\|_{\mathcal{H}(\Gamma)}\le C\|\cdot\|_{H^{1/2}(\Gamma)} $.
\end{itemize}
Inequality (a) is equivalent to $\mathcal{H}(\Gamma)\subset H^{1/2}(\Gamma)$ and Inequality (b) to $H^{1/2}(\Gamma)\subset\mathcal{H}(\Gamma)$.
The purpose of this section is to examine the possible converse to Theorem \ref{appro}; that is the following theorem.
\begin{theo} \label{rec} Let $\Gamma$ be a rectifiable quasicircle. The following two statements hold.
\begin{itemize}
    \item[{\rm(1)}] If  \rm{(a)} holds then $\Gamma$ is a chord-arc curve.
    \item[{\rm(2)}] There exists a non-chord-arc rectifiable quasicircle $\Gamma$ such that \rm{(b)} holds. 
\end{itemize}
\end{theo}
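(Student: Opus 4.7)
My plan is to show that (a) forces $\Gamma$ to be Ahlfors regular, which combined with the hypothesis that $\Gamma$ is a rectifiable quasicircle yields chord-arc. Using $\varphi:\mathbb{D}\to\Omega$ and the isometry \eqref{norm}, hypothesis (a) becomes the weighted Douglas-type inequality
\begin{equation*}
\iint_{\mathbb{S}\times\mathbb{S}}\left|\frac{g(\zeta_1)-g(\zeta_2)}{\varphi(\zeta_1)-\varphi(\zeta_2)}\right|^{2}|\varphi'(\zeta_1)||\varphi'(\zeta_2)|\,|d\zeta_1||d\zeta_2|\le C\,\|g\|_{H^{1/2}(\mathbb{S})}^{2}
\end{equation*}
valid for every $g\in H^{1/2}(\mathbb{S})$. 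I would test this against smooth cut-offs $g_{I_0}$ of indicators of arcs $I_0\subset\mathbb{S}$, normalized so that $\|g_{I_0}\|_{H^{1/2}(\mathbb{S})}^{2}\lesssim 1$ uniformly in $|I_0|$. Splitting the left-hand side into dyadic shells around $I_0$ and using the quasisymmetry of $\varphi|_{\mathbb{S}}$ (together with Koebe distortion, to replace $|\varphi(\zeta_1)-\varphi(\zeta_2)|$ by $\mathrm{diam}(\varphi(I_0))$ inside each shell and to convert $|\varphi'|\,|d\zeta|$ into arclength on $\Gamma$), the contribution of the first shell is of order $\ell(\varphi(I_0))/\mathrm{diam}(\varphi(I_0))$. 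The inequality then delivers $\ell(\varphi(I_0))\lesssim \mathrm{diam}(\varphi(I_0))$ for every arc $I_0$, which combined with quasicircularity is precisely the chord-arc condition.

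\textbf{Part (2).} My plan is to construct $\Gamma$ explicitly as the unit circle with a sequence of similar triangular teeth of geometric scales $2^{-n}$ accumulating at a single point $p_{0}\in\mathbb{S}$, each tooth having a fixed opening angle $\theta\in(0,\pi)$. The bounded-angle condition keeps $\Gamma$ a quasicircle, while tuning the height-to-base ratio of the $n$-th tooth to grow slowly with $n$ makes Ahlfors regularity fail at $p_{0}$ yet keeps the total arclength finite, so $\Gamma$ is rectifiable but not chord-arc. To verify (b) I transport it through $\varphi$ as in Part (1); the goal becomes
\begin{equation*}
\|g\|_{H^{1/2}(\mathbb{S})}^{2}\le C\iint_{\mathbb{S}\times\mathbb{S}}\left|\frac{g(\zeta_1)-g(\zeta_2)}{\varphi(\zeta_1)-\varphi(\zeta_2)}\right|^{2}|\varphi'(\zeta_1)||\varphi'(\zeta_2)|\,|d\zeta_1||d\zeta_2|.
\end{equation*}
The key geometric input is a Beurling--Makarov-type harmonic-measure estimate showing that the $\varphi$-preimage of the $n$-th tooth is a very short arc of $\mathbb{S}$ on which $|\varphi'|$ is correspondingly small; consequently the ``extra'' arclength carried by the teeth contributes controllably to both sides of the Douglas-type inequality on $\mathbb{S}$. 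Outside the teeth, $\Gamma$ is locally chord-arc and Theorem \ref{appro} applies, so (b) reduces to the sum of a chord-arc estimate and a geometric series over the generations of teeth.

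\textbf{Expected main obstacles.} In Part (1), the delicate point is extracting the sharp length scale $\mathrm{diam}(\varphi(I_0))$, rather than the trivial $\mathrm{diam}(\Gamma)$, from the test-function computation; this is where the dyadic-shell decomposition of $\mathbb{S}\setminus I_0$ and Koebe distortion are essential, and where I would expect to spend most of the work. In Part (2), the hardest step is calibrating the tooth geometry: the height-to-base growth must be slow enough that the harmonic-measure estimates for $\varphi$ near $p_{0}$ remain sharp enough to cover the Douglas-type inequality on both sides, while still being fast enough to make the failure of Ahlfors regularity quantitative and genuine.
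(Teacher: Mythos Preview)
Your approach is sound and genuinely different from the paper's. The paper tests (a) not with bump functions but with the analytic family $f_w(z)=1/(z-w)$ for $w\notin\Gamma$; a direct computation gives $\|f_w\|_{H^{1/2}(\Gamma)}=\frac{1}{2\pi}\,\mathrm{length}(M_w(\Gamma))$ (with $M_w(z)=1/(z-w)$) and $\|f_w\|_{\mathcal H(\Gamma)}\le d(w,\Gamma)^{-1}$, so (a) becomes $\mathrm{length}(M_w(\Gamma))\lesssim d(w,\Gamma)^{-1}$. The bulk of the work then goes into Theorem~\ref{mey}, showing that this inversion-length bound characterizes Ahlfors regularity. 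Your route bypasses Theorem~\ref{mey} entirely and reaches the chord-arc inequality directly via quasisymmetry of $\varphi|_{\mathbb S}$; this is more elementary and self-contained (Koebe distortion is not actually needed once you have quasisymmetry of the boundary map). The paper's route, in exchange, isolates a characterization of regularity of independent interest.

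\textbf{Part (2).} Here there is a genuine gap. Your construction is internally inconsistent: ``similar triangular teeth'' with ``a fixed opening angle $\theta$'' forces a fixed height-to-base ratio, whereas ``tuning the height-to-base ratio to grow with $n$'' forces the tip angle to tend to $0$. The second option destroys the quasicircle property: if the $n$-th tooth has base $b_n$ and height $h_n$ with $h_n/b_n\to\infty$, then taking $z_1,z_2$ at the two basepoints of that tooth, the shorter subarc has diameter $\sim h_n$ while $|z_1-z_2|=b_n$, so Ahlfors' three-point condition fails. Conversely, with fixed opening angle (similar teeth at dyadic scales accumulating at $p_0$) the curve you describe \emph{is} chord-arc. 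More generally, producing a rectifiable quasicircle that is not chord-arc is delicate; self-similar von Koch-type attempts are either chord-arc or have dimension $>1$ and are not rectifiable.

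The paper's route is completely different and avoids any explicit geometry: it invokes the existence (Duren--Shapiro--Shields, Kahane) of a rectifiable quasidisk $\Omega$ whose Riemann map satisfies $|\varphi'|=1$ a.e.\ on $\mathbb S$ and $|\varphi'|<1$ in $\mathbb D$. Such $\Omega$ is not a Smirnov domain, hence (by Lavrentiev) not chord-arc. Because $|\varphi'|=1$ a.e., the boundary map $\varphi|_{\mathbb S}$ is an arclength parametrization of $\Gamma$, so the elementary inequality $|\varphi(e^{it})-\varphi(e^{is})|\le\frac{\pi}{2}|e^{it}-e^{is}|$ of Lemma~\ref{sin} gives
\[
\|f\|_{H^{1/2}(\Gamma)}^2\ \ge\ \frac{4}{\pi^2}\,\|f\circ\varphi\|_{H^{1/2}(\mathbb S)}^2\ =\ \frac{4}{\pi^2}\,\|f\|_{\mathcal H(\Gamma)}^2
\]
in one line, with no harmonic-measure analysis required.
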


We  will discuss notions of Ahlfors-regularity in the next subsection; that will be the key tool in the proof of part $(1)$ of Theorem \ref{rec}  and also have independent interests of their own. 
We then prove parts $(1)$ and $(2)$ of Theorem \ref{rec} in the final two subsections.

\subsection{Ahlfors-regular curves}
In this section we discuss about Ahlfors-regular curves, sometimes also named Ahlfors-David regular curves. Ahlfors' name appears because this author already considered this condition in \cite{Ahl} and David's one because this author proved that these curves are precisely the curves for which the Cauchy operator is bounded on $L^2$  (see \cite{Dav}). In the sequel we will simply call regular these curves whose precise definition is
\begin{df} 
Let $\Gamma$ be a rectifiable curve in the plane. We say that $\Gamma$ is M-regular if for 
any $z \in \mathbb C$ and $r > 0$,
$$\mathrm{length}(\Gamma\cap D(z,r))\leq Mr,$$
where $D(z, r)$ stands for the open disk centered at $z$ and of radius $r$. 
\end{df}
Our aim is to prove the following theorem giving two equivalent properties of regularity. The first of these properties involves the Riemann sphere $S^2$, which is $\mathbb C\cup \{\infty\}$ equipped with the metric
$$ d\rho=\frac{|dz| }{1+|z|^2}.$$
We will denote by s-length the spherical length of a curve.  
The group of conformal automorphisms of $S^2$ is the group of M\"obius transformations
$$ z\mapsto \frac{az+b}{cz+d},\quad ad-bc=1,$$
which is thus isomorphic to ${\rm PSL}(2, \mathbb C)$. 

\begin{lem} \label{Mob}
If $\Gamma$ is an $M$-regular curve then $T(\Gamma)$ is, for any M\"obius transformation $T$, $12M$-regular.  
\end{lem}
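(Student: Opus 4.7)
The plan is to decompose any M\"obius transformation as a product of similarities (maps of the form $z\mapsto \alpha z+\beta$) and the standard inversion $J(z)=1/z$. Using $ad-bc=1$, one checks that $T(z)=(az+b)/(cz+d)$ is either itself a similarity (when $c=0$) or equals $a/c-1/(c(cz+d))$, i.e., $S_2\circ J\circ S_1$ for the similarities $S_1(z)=cz+d$ and $S_2(v)=-v/c+a/c$. A similarity $S(z)=\alpha z+\beta$ scales both the Euclidean length element and any radius by the same factor $|\alpha|$, and hence preserves $M$-regularity with the same constant $M$. Thus it suffices to show that $J(\Gamma)$ is $12M$-regular whenever $\Gamma$ is $M$-regular.

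For this, I would use the identity $|d(1/z)|=|dz|/|z|^2$ to rewrite
\[
\mathrm{length}(J(\Gamma)\cap D(w,r))\ =\ \int_{\Gamma\cap J^{-1}(D(w,r))}\frac{|dz|}{|z|^2},
\]
and split into two cases according to the position of $0$ relative to $D(w,r)$. In Case~1, $|w|\ge 2r$, so $0\notin\overline{D(w,r)}$ and $J^{-1}(D(w,r))$ is a genuine Euclidean disk, namely $D(\bar w/(|w|^2-r^2),\,r/(|w|^2-r^2))$, on which $1/|z|^2\le(|w|+r)^2$. Applying $M$-regularity of $\Gamma$ to this preimage disk and simplifying give the bound $Mr(|w|+r)/(|w|-r)$, which is at most $3Mr$ precisely because $|w|\ge 2r$.

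In Case~2, $|w|<2r$, so $D(w,r)\subset D(0,3r)$, and it suffices to control $\int_{\Gamma\cap\{|z|>1/(3r)\}}|dz|/|z|^2$. Here I would decompose the exterior region into the dyadic annuli $A_k=\{2^k/(3r)\le|z|<2^{k+1}/(3r)\}$ for $k=0,1,2,\ldots$. On each $A_k$ the integrand satisfies $1/|z|^2\le (3r/2^k)^2$, while $M$-regularity applied to the enclosing disk $D(0,2^{k+1}/(3r))$ gives $\mathrm{length}(\Gamma\cap A_k)\le 2^{k+1}M/(3r)$; multiplying these two estimates and summing the resulting geometric series $\sum_{k\ge 0} 6Mr/2^k=12Mr$ produces the final constant. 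The main subtlety is Case~2: the Case~1 estimate deteriorates badly as $|w|\to r^+$, so one genuinely needs the annular decomposition whenever $0$ is close to the target disk, and the choice of dyadic scale is what makes both the regularity hypothesis and the weight $1/|z|^2$ cooperate to yield a convergent series.
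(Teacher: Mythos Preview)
Your argument is correct and essentially identical to the paper's own proof: the same reduction to the inversion $z\mapsto 1/z$ via the similarity--inversion--similarity factorization, the same two-case split according to whether $|w|\gtrless 2r$, the same use of the explicit preimage disk and the bound $1/|z|^2\le(|w|+r)^2$ in the first case (yielding $3Mr$), and the same dyadic annular decomposition of $\{|z|>1/(3r)\}$ in the second case (yielding $12Mr$). The only differences are notational.
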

\begin{proof}
   Every M\"obius transformation $T$ may be factorized as $T=S_1\circ \Theta\circ S_2 $ where $S_i,\,i=1,2$, are similitudes, and $\Theta(z)=1/z$. The property of regularity, along with the constant $M$, is obviously invariant for similitudes so it suffices to prove the lemma for $\Theta$.
    
Let us consider an $M$-regular curve $\Gamma$. Set $z_0=r_0e^{it}\in \mathbb{C},\,r>0,\,D =D (z_0,r)$. Without loss of generality we may assume that $z_0$ is real positive (i.e., $t=0$). Set $\Delta  = \Theta^{-1}(D)$. If $r<r_0/2$ then $\Delta$ is the disk centered at $\frac{r_0}{r_0^2-r^2}$ with radius $\frac{r}{r_0^2-r^2} < \frac{4r}{3r_0^2}$. Moreover if $z\in\Delta,\,|z| \ge \frac{1}{r_0+r} > \frac{2}{3r_0}$. It follows that 
\begin{align*}
 \mathrm{length}(\Theta(\Gamma)\cap D)&=\int_{\Gamma\cap\Delta}\frac{|dz|}{|z|^2} 
 \leq \int_{\Gamma\cap\Delta}\frac{|dz|}{\left(2/(3r_0)\right)^2}\\
 &\leq M\left(\frac{4r}{3r_0^2}\right)\left(\frac{3r_0}{2}\right)^2= 3Mr.
 \end{align*}
If now  $r\ge r_0/2$, $D$ is included in $D(0,3r)$ and $\Delta$ in $\{|z|>1/(3r)\}$. We may then write
\begin{align*}
\mathrm{length}(\Theta(\Gamma)\cap D)&=\int_{\Gamma \cap \Delta}\frac{|dz|}{|z|^2}\le \int_{\Gamma \cap \{|z|>\frac{1}{3r}\}}\frac{|dz|}{|z|^2}\\
&= \sum\limits_{n\ge 0}\int_{\Gamma\cap\{\frac{2^n}{3r}<|z|\le \frac{2^{n+1}}{3r}\}}\frac{|dz|}{|z|^2}\le\sum\limits_{n\ge 0}M\frac {2^{n+1}}{3r}\frac{9r^2}{2^{2n}} = 12Mr.
\end{align*}
\end{proof}

We are now ready to state the main result of this subsection.

\begin{theo}\label{mey}
Let $\Gamma$ be a rectifiable curve in the plane. Then the following statements are equivalent.
\begin{itemize}
    \item[{\rm(i)}] $\Gamma$ is an $M$-regular curve,
    \item[{\rm(ii)}] There exists $K>0$ such that for any M\"obius transformation $T$,
$$ \text{\rm s-length}(T(\Gamma))\leq K,$$
    \item[{\rm(iii)}] There exists $C>0$ such that for every $w\notin \Gamma$,
$$\mathrm{length}(M_w(\Gamma))\leq \frac{C}{d(w,\Gamma)},$$
where $M_w(z) = 1/(z - w)$ is an inversion and 
$d(w,\Gamma)$ is the distance from $w$ to $\Gamma$.
\end{itemize}
\end{theo}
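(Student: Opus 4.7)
My plan is to prove the three statements equivalent cyclically: (i) $\Rightarrow$ (ii) $\Rightarrow$ (iii) $\Rightarrow$ (i). Lemma~\ref{Mob} reduces (i) $\Rightarrow$ (ii) to an absolute estimate (``every regular curve has bounded spherical length''), while two calculations involving carefully chosen M\"obius transformations drive the other two arrows.

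For (i) $\Rightarrow$ (ii), Lemma~\ref{Mob} says that $T(\Gamma)$ is $12M$-regular for every M\"obius $T$, so it suffices to prove that any $M'$-regular curve $C$ has $\text{s-length}(C) \leq K(M')$. I would split $C$ into the ball $C \cap \{|z| \leq 1\}$ and the dyadic annuli $C \cap \{2^{n-1} < |z| \leq 2^n\}$ for $n \geq 1$; regularity bounds the Euclidean length of the $n$-th piece by $M' \cdot 2^n$ while the integrand $1/(1+|z|^2)$ there is at most $4 \cdot 2^{-2n}$, and the resulting geometric series converges.

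For (ii) $\Rightarrow$ (iii), fix $w \notin \Gamma$, set $d = d(w,\Gamma)$, and apply (ii) to the M\"obius $T_d(z) = d/(z-w)$. A direct computation gives $|T_d'(z)|/(1 + |T_d(z)|^2) = d/(|z-w|^2 + d^2)$, and since $|z-w| \geq d$ on $\Gamma$ the denominator is at most $2|z-w|^2$, so (ii) yields $\int_\Gamma |dz|/|z-w|^2 \leq 2K/d$, which is (iii). For (iii) $\Rightarrow$ (i), I would average (iii) over the family $w(\theta) = z_0 + re^{i\theta}$ parametrizing $\partial D(z_0,r)$. For a.e.~$r$ the circle meets $\Gamma$ in a set of $\theta$-measure zero (a Fubini/coarea check), so (iii) applies at $w(\theta)$ for a.e.~$\theta$; integrating in $\theta$ and swapping gives
\begin{equation*}
\int_\Gamma |dz| \int_0^{2\pi} \frac{d(w(\theta),\Gamma)}{|z - w(\theta)|^2}\, d\theta \leq 2\pi C.
\end{equation*}
Now I use the trivial estimate $d(w(\theta),\Gamma) \leq |z - w(\theta)|$ to cancel one power of $|z - w(\theta)|$, and the triangle inequality $|z-w(\theta)| \leq 3r/2$ on $\Gamma \cap D(z_0,r/2)$ to bound the angular integral from below by $4\pi/(3r)$; this yields $\mathrm{length}(\Gamma \cap D(z_0, r/2)) \leq 3Cr/2$. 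Monotonicity of the length function in the radius then upgrades this from a.e.~$r$ to all $r$, giving (i) with $M = 3C$.

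The main obstacle is this last step. A naive pointwise use of (iii) --- bounding $\mathrm{length}(\Gamma \cap D(z_0, r))$ by $(2r)^2 \cdot \mathrm{length}(M_w(\Gamma)) \leq 4Cr^2/d(w,\Gamma)$ for a single $w$ near $z_0$ --- would demand a witness $w$ simultaneously within $O(r)$ of $z_0$ and at distance $\gtrsim r$ from $\Gamma$, and no universal geometric argument supplies one (take $\Gamma$ to be a circle centered at $z_0$). The averaging circumvents this: the trivial inequality $d(w,\Gamma) \leq |z-w|$ downgrades the inverse-square kernel to an inverse linear one after integration in $\theta$, and together with the uniform upper bound on $|z-w(\theta)|$ over $D(z_0, r/2)$ this restores the correct homogeneity and produces linear growth of the length.
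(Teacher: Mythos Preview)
Your arguments for (i)~$\Rightarrow$~(ii) and (ii)~$\Rightarrow$~(iii) are correct and essentially coincide with the paper's (the paper uses $T(z)=(z-w)/d$ instead of your $T_d(z)=d/(z-w)$, but the two differ by $z\mapsto 1/z$, a spherical isometry, so the computations are equivalent).

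The implication (iii)~$\Rightarrow$~(i), however, contains a genuine error of direction. After averaging you arrive at
\[
\int_\Gamma |dz|\int_0^{2\pi}\frac{d(w(\theta),\Gamma)}{|z-w(\theta)|^2}\,d\theta \;\le\; 2\pi C,
\]
and to extract an upper bound on $\mathrm{length}(\Gamma\cap D(z_0,r/2))$ you need a \emph{lower} bound on the inner integral for $z\in\Gamma\cap D(z_0,r/2)$. The ``trivial estimate'' $d(w(\theta),\Gamma)\le |z-w(\theta)|$ points the wrong way: it gives $d(w(\theta),\Gamma)/|z-w(\theta)|^2 \le 1/|z-w(\theta)|$, an \emph{upper} bound on the integrand, which is useless here. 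There is no compensating lower bound available: if $\Gamma$ happens to have enormous length inside $D(z_0,2r)$ (precisely the scenario you must rule out), then $d(w(\theta),\Gamma)$ can be arbitrarily small for \emph{every} $\theta$, and the inner integral can be made as small as you like. So the averaging trick, while attractive, does not close the loop; the obstacle you identified (no single $w$ with $d(w,\Gamma)\gtrsim r$) is not circumvented but merely smeared out.

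The paper's proof of (iii)~$\Rightarrow$~(i) is accordingly much more involved. One fixes a unit square $\mathcal C$ centered on $\Gamma$, sets $L=\mathrm{length}(\Gamma\cap\mathcal C)$, and assumes $L$ is large. A first application of (iii) at an arbitrary $z_0\in\mathcal C\setminus\Gamma$ gives $d(z_0,\Gamma)\lesssim C/L$, so $\Gamma$ is $1/n$-dense in $\mathcal C$ with $n\approx L/C$; cutting $\mathcal C$ into $n^2$ subsquares, each $3\mathcal C_j$ then carries length $\ge 1/n$. Feeding this back into (iii) via a comparison with an area integral improves the density estimate to $d(z_0,\Gamma)\lesssim C/(n\log n)$. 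Repeating the subdivision at the finer scale $n'\approx n\log n/C$ and summing the lengths in the $3\mathcal C_j$'s yields $L\gtrsim n'\gtrsim (n\log n)/C$, which together with $L\lesssim Cn$ forces $n$ (hence $L$) to be bounded in terms of $C$ alone. This bootstrap is the missing idea.
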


\begin{proof} 
We first show that (i) $\Rightarrow$ (ii). Suppose $\Gamma$ is $M$-regular. It follows from  Lemma \ref{Mob} that for any M\"obius transformation $T$, $T(\Gamma)$ is $12M$-regular. 
Let $\eta:\,[0,1]\to \mathbb C$ be an absolutely continuous parametrization of $T(\Gamma)$.  We see the s-length of $T(\Gamma)$ is
$$
\int_0^1\frac{|\eta'(t)|dt}{1+|\eta(t)|^2}=I+\sum\limits_{n=0}^\infty I_n,$$
where 
$$ I=\int_{|\eta(t)|<1}\frac{|\eta'(t)|dt}{1+|\eta(t)|^2}\leq 12M$$
while 
$$ I_n=\int_{2^n\leq |\eta(t)|<2^{n+1}}\frac{|\eta'(t)|dt}{1+|\eta(t)|^2}\leq \frac{12M2^{n+1}}{1+2^{2n}}\leq 12M2^{1-n},$$
and thus we have $\text{\rm s-length}(T(\Gamma))\leq K$ by taking $K = 60M$.

Now we prove (ii) $\Rightarrow$ (iii). Let $\gamma:\,[0,1]\to \mathbb C$ be an absolutely continuous parametrization of $\Gamma$. 
Set $w\notin \Gamma$ so that
$$\mathrm{length}(M_w(\Gamma))=\int_0^1\frac{|\gamma'(t)|dt}{|\gamma(t)-w|^2}.$$ We may write, by definition of $d:=d(w,\Gamma),$
$$|\gamma(t)-w|^2\geq\frac 12(|\gamma(t)-w|^2+d^2)$$
for all $t\in [0, 1]$, 
so that, if we define $\eta(t)=\frac{\gamma(t)-w}{d},$
$$\int_0^1\frac{|\gamma'(t)|dt}{|\gamma(t)-w|^2}\leq 2\int_0^1\frac{|\gamma'(t)|dt}{d^2+|\gamma(t)-w|^2} = \frac 2d\int_0^1\frac{|\eta'(t)|dt}{1+|\eta(t)|^2}\leq \frac{2K}{d}$$
by (ii). 

 Finally, we prove (iii) $\Rightarrow$ (i).  This is the hardest part of the proof. We first notice that in order to test the ``regularity property" of a curve we may replace disks by squares of diameter not greater than the diameter of $\Gamma$, moreover centered on $\Gamma$. Let $\mathcal{C}$ be such a square. By the invariance of regularity under similitudes, one may assume that $\mathcal{C}$ has side-length $1$ and center $0$. We define
$$ L = \mathrm{length}(\mathcal{C}\cap\Gamma).$$
The goal is to estimate $L$ from above by a constant depending only on $C$, the constant in  (iii).
If $L \leq 100 C$, we are done. 
In the other case let $n$ be the integer part of $\frac{L}{2C}$. It implies that $L < 3Cn$. It remains to prove that $n$ is bounded from above by a constant depending only on $C$. We will do this in two steps.

We first cut $\mathcal{C}$ into $n^2$ sub-squares of side-length $\frac 1n$, that we call $\mathcal{C}_j$. We denote by $k\mathcal{C}_j$ ( $k>0$) the intersection of $\mathcal{C}$ and the square with the same center as $\mathcal{C}_j$  whose side-length is $k$ times the side-length of $\mathcal{C}_j$. 
If $z_0\in\mathcal{C}\!\setminus\!\!\Gamma$ then we have on one side
$$\int_{\Gamma\cap\mathcal{C}}|z-z_0|^{-2}|dz|\geq \frac{L}{2},$$
while, on the other side, by (iii),
$$\int_{\Gamma\cap\mathcal{C}}|z-z_0|^{-2}|dz|\leq \frac{C}{d(z_0,\Gamma)}.$$
Consequently,
\begin{equation}\label{d}
    d(z_0,\Gamma)\leq \frac{2C}{L} \leq \frac{1}{n},
\end{equation}
from which it follows that every $2\mathcal{C}_j$ meets $\Gamma$. 
Since diam$(\mathcal{C}_j)\leq$diam$(\Gamma)/10$, $\Gamma$ also meets the complement of $3\mathcal{C}_j$ and, as a consequence,
$$\mathrm{length}(\Gamma\cap3\mathcal{C}_j)\geq \frac 1n.$$

Next, note that
$$\int_\Gamma |z-z_0|^{-2}|dz|\geq \frac 19\sum\limits_{j\in J}\int_{3\mathcal{C}_j\cap\Gamma}|z-z_0|^{-2}|dz|,$$
where $J=\{j: z_0\notin 27\mathcal{C}_j\}$. If $z,z'\in 3\mathcal{C}_j,\, j\in J,$ then $|z'-z_0|\leq 2|z-z_0|$, from which it follows easily that
\begin{align*}
    \iint_{3\mathcal{C}_j}|z-z_0|^{-2}dxdy&\leq n\times \mathrm{length}(\Gamma\cap3\mathcal{C}_j) \times \Big(\frac{3}{n}\Big)^2\times \max_{z \in 3\mathcal{C}_j}|z - z_0|^{-2} \\
    &\leq \frac{36}{n}\int_{\Gamma\cap3\mathcal{C}_j}|z-z_0|^{-2}|dz|.
\end{align*}
By comparison with an integral we have
\begin{align*}
\sum\limits_{j\in J}\iint_{3\mathcal{C}_j}|z-z_0|^{-2}dxdy&\geq\int_0^{\pi/4}\int_{12/n}^{1/2}\frac 1r drd\theta\\
&\geq\frac{\pi}{4}\log \frac{n}{24}.
\end{align*}
Combining these estimates we deduce that 
$$\int_\Gamma |z-z_0|^{-2}|dz|\geq \frac{n}{500}\log\frac{n}{24},$$
and this, combined with (iii), implies 
\begin{equation}\label{dd}
    d(z_0,\Gamma)\leq  \frac{500C}{n\log\frac{n}{24}}.
\end{equation}
Let $n'$ be the integer part of $\frac{n\log \frac{n}{24}}{2000C}$. We can suppose that $n'$ is bigger than $n$. If not, $n$ must be bounded from above by a constant depending only on $C$, and then we are done. 
 By \eqref{dd} we have
\begin{equation}
    d(z_0,\Gamma)\leq \frac{1}{4n'}
\end{equation}
which is an improvement of \eqref{d}. Then we repeat the above discussion. 
 We  cut $\mathcal{C}$ into $n'^2$ sub-squares of side-length $\frac{1}{n'}$, and we realize that each of these sub-squares meets $\Gamma$. On the other hand, $\Gamma$  meets the complement of $3\mathcal{C}_j$. Then, we see
 $$\mathrm{length}(\Gamma\cap3\mathcal{C}_j)\geq \frac{1}{n'}.$$
 As a consequence,  
\begin{equation}\label{MM}
    L > \frac{1}{9} \times \sum_{j=1}^{n'^2} \mathrm{length}(\Gamma\cap3\mathcal{C}_j)        > \frac{n'}{9} > \frac{1}{10}\times \frac{n\log\frac{n}{24}}{2000C}.
\end{equation}
Combining this with $L < 3Cn$ we conclude that $n < 24e^{60000C^2}$. Finally, we deduce that 
$L < 72Ce^{60000C^2},$ where $C$ is the constant occurring in (iii). 
The proof of the theorem is complete.
\end{proof}

\subsection{Necessity of the chord-arc property: part $(1)$ of Theorem \ref{rec}}
In this subsection we prove part  $(1)$ of Theorem \ref{rec}.
To this end we first consider  $w\in \Omega_e$ and apply the Inequality (a) to the analytic function $f(z)=\frac{1}{z-w}$ defined on $\Omega\cup\Gamma$. The square of the integral on the right hand side is then
\begin{align*}
   \|f\|_{\mathcal{H}(\Gamma)}^2  & = \frac{1}{\pi}\iint_{\Omega}|f'(z)|^2 dxdy= 
\frac{1}{\pi}\iint_\Omega |z - w|^{-4} dxdy\\
&\leq \frac{1}{\pi}\iint_{|z - w| > d(w,\Gamma)}|z - w|^{-4}dx dy \\
&= d(w,\Gamma)^{-2}
\end{align*}
as we see using polar coordinates. 
The square of the integral on the left hand side is
\begin{align*}
\|f\|_{H^{1/2}(\Gamma)}^2 &= \frac{1}{4\pi^2}\iint_{\Gamma\times\Gamma}\left| \frac{f(z)-f(\zeta)}{z-\zeta}\right|^2 |dz||d\zeta| \\
&= \frac{1}{4\pi^2}\iint_{\Gamma\times\Gamma}\left|\frac{\frac{1}{z-w}-\frac{1}{\zeta-w}}{z-\zeta}\right|^2|dz||d\zeta|\\ 
&=\frac{1}{4\pi^2}\int_\Gamma\int_{\Gamma}\frac{1}{|z-w|^2|\zeta-w|^2}|dz||d\zeta|\\
&=\left(\frac{1}{2\pi}\mathrm{length} \left(M_w(\Gamma)\right)\right)^2
\end{align*}
by Fubini theorem and with the notations of last subsection. 

We have thus proven that if (a) is valid then there exists a constant $C > 0$ such that 
\begin{equation}\label{ineq}
    \mathrm{length}(M_w(\Gamma))\leq \frac{C}{d(w,\Gamma)}
\end{equation}
for any $w\in \Omega_e$. Next we consider $w\in \Omega$ and apply $(a)$ to the analytic function $f(z) = \frac{1}{z-w}$ defined on $\Omega_e\cup\Gamma$. 
Using the fact that $\mathcal{H}(\Gamma)$ is a ``two-sided" space for a quasicircle $\Gamma$, we may  similarly see that the inequality \eqref{ineq} 
remains true for any $w\in \Omega$. We can then invoke Theorem \ref{mey}, and see that $\Gamma$ has to be regular. Consequently, $\Gamma$ is chord-arc since we have assumed it is a quasicircle. This completes the proof of part $(1)$ of Theorem \ref{rec}. 

As noted above, it follows from Proposition \ref{compo} that when $h$ is a quasisymmetric mapping from $\mathbb S$ onto a rectifiable quasicircle $\Gamma$ it holds that $\|f\|_{\mathcal{H}(\Gamma)} = \|f\circ\varphi\|_{H^{1/2}(\mathbb S)} \approx \|f\circ h\|_{H^{1/2}(\mathbb S)}$, and thus  part $(1)$ of Theorem \ref{rec} does immediately imply the following fact.
\begin{cor}\label{Cor2}
   Let $h$ be a quasisymmetric mapping of $\mathbb S$ onto a rectifiable quasicircle $\Gamma$. Let the composition operator $(V_{h})^{-1}: f \mapsto f\circ h^{-1}$ be a bounded linear operator from $H^{1/2}(\mathbb S)$ into $H^{1/2}(\Gamma)$. Then $\Gamma$ is a chord-arc curve.
\end{cor}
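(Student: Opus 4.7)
The plan is to recognize that the hypothesis on $(V_h)^{-1}$ is nothing but a reformulation of inequality $(a)$ from the statement of Theorem \ref{rec}, once one factors through a Riemann map. The conclusion then follows immediately from part $(1)$ of that theorem.

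First I would fix a conformal map $\varphi:\mathbb D\to\Omega$, where $\Omega$ is the bounded component of $\mathbb C\setminus\Gamma$. Since $\Gamma$ is a quasicircle, the boundary map $\varphi|_{\mathbb S}:\mathbb S\to\Gamma$ is quasisymmetric. Composing with $h^{-1}$ (also quasisymmetric) yields that
$$
\tau:=h^{-1}\circ\varphi|_{\mathbb S}:\mathbb S\to\mathbb S
$$
is a quasisymmetric self-homeomorphism of the circle. By Proposition \ref{compo}, the composition operator $V_\tau:g\mapsto g\circ\tau$ is a bounded isomorphism of $H^{1/2}(\mathbb S)$.

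Next I would transfer norms to the circle. For $f\in\mathcal H(\Gamma)$, the defining identity \eqref{norm} gives $\|f\|_{\mathcal H(\Gamma)}=\|f\circ\varphi\|_{H^{1/2}(\mathbb S)}$. Writing $f\circ\varphi=(f\circ h)\circ\tau$ and invoking the previous step,
$$
\|f\|_{\mathcal H(\Gamma)}=\|(f\circ h)\circ\tau\|_{H^{1/2}(\mathbb S)}\approx\|f\circ h\|_{H^{1/2}(\mathbb S)},
$$
with implied constants depending only on the quasisymmetry constant of $\tau$. Separately, the hypothesis that $(V_h)^{-1}:H^{1/2}(\mathbb S)\to H^{1/2}(\Gamma)$ is bounded, applied to $g=f\circ h$, reads
$$
\|f\|_{H^{1/2}(\Gamma)}=\|(f\circ h)\circ h^{-1}\|_{H^{1/2}(\Gamma)}\lesssim\|f\circ h\|_{H^{1/2}(\mathbb S)}
$$
for all admissible $f$. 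Chaining these two estimates yields $\|f\|_{H^{1/2}(\Gamma)}\lesssim\|f\|_{\mathcal H(\Gamma)}$, which is precisely inequality $(a)$ in the statement of Theorem \ref{rec}.

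Part $(1)$ of Theorem \ref{rec} then forces $\Gamma$ to be chord-arc, completing the proof. There is no genuine difficulty here: the argument is a bookkeeping chain matching the hypothesis to the setting of Theorem \ref{rec}. The only subtle point to check is the direction of composition, namely that it is boundedness of $(V_h)^{-1}$ (rather than $V_h$) that corresponds to inequality $(a)$ rather than $(b)$, so that part $(1)$ of Theorem \ref{rec} is the applicable assertion.
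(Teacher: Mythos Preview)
Your proof is correct and follows essentially the same approach as the paper: you use Proposition~\ref{compo} together with \eqref{norm} to establish $\|f\|_{\mathcal H(\Gamma)}\approx\|f\circ h\|_{H^{1/2}(\mathbb S)}$, combine this with the boundedness of $(V_h)^{-1}$ to obtain inequality~(a), and then invoke part~(1) of Theorem~\ref{rec}. The paper compresses the argument into a single sentence preceding the corollary, but the logic is identical.
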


\subsection{Non-Smirnov domains: part $(2)$ of Theorem \ref{rec}}
In this subsection we present a counter-example to show part $(2)$ of Theorem \ref{rec}. 
When $\Gamma$ is a Jordan curve with $\Omega$ as interior domain and $\varphi:\mathbb D\to \Omega $ a Riemann mapping, we know (F. and M. Riesz theorem \cite{pom}) that $\Gamma$ is rectifiable if and only if $\varphi'$ belongs to the Hardy space $H^1(\mathbb D)$.
\begin{df} 
Let $\Gamma$ be a rectifiable curve in the plane. We say that $\Omega$ is a Smirnov domain if $\varphi'$ is an outer function of $H^1(\mathbb D)$; that is, 
\begin{equation}\label{outer}
    \log\vert\varphi'(z)\vert = \int_{\mathbb S} p(z,\zeta) \log\vert\varphi'(\zeta)\vert|d\zeta|\quad \text{\rm for}\; z \in \mathbb D,
\end{equation}
where $p$ is the Poisson kernel.
\end{df}
It has been shown by Lavrentiev \cite{lav} that chord-arc domains are Smirnov domains and later by the second author \cite{zin} that Jordan domains with Ahlfors-regular boundary also have Smirnov property. On the other hand there exists a rectifiable quasidisk $\Omega$ whose Riemann map $\varphi$ 
satisfies that $\varphi'$ is an inner function, see \cite{dss, kah}; that is, 
\begin{equation}\label{ex}
    \vert\varphi'(\zeta)\vert = 1 \quad \text{\rm for almost\;all }\, \zeta \in \mathbb S, \quad \vert\varphi'(z)\vert < 1 \quad \text{\rm for}\, z \in \mathbb D. 
\end{equation}
In particular, harmonic measure on $\Gamma$ with respect to  $\varphi(0)$ is equal to arc-length measure on $\Gamma$ despite the fact that $\Omega $ is not a disk. 
But the Smirnov condition \eqref{outer} is not satisfied because of \eqref{ex}. 

 We are going to exploit this fact in order to show that this domain satisfies Inequality (b) even if it is not chord-arc. This follows immediately from the following observation 
\begin{align*}
\|f\|^2_{H^{1/2}(\Gamma)}&=\int_0^{2\pi}\int_0^{2\pi}\left|\frac{f(\varphi(e^{it}))-f(\varphi(e^{is}))}{\varphi(e^{it})-\varphi(e^{is})}\right|^2\frac{dt}{2\pi} \frac{ds}{2\pi}    \\
& \geq \frac{4}{\pi^2} \int_{0}^{2\pi}\int_{0}^{2\pi}\Big\vert \frac{f(\varphi(e^{it})) - f(\varphi(e^{is}))}{e^{it} - e^{is}} \Big\vert^2 \frac{dt}{2\pi} \frac{ds}{2\pi}\\
&=\frac{4}{\pi^2} \|f\|^2_{\mathcal{H}(\Gamma)}, \quad \text{for} \, f\in H^{1/2}(\Gamma).
\end{align*}
Here, the inequality ``$\geq$" is due to the inequality \eqref{ineqref1}. 
This completes the proof of part $(2)$ of Theorem \ref{rec}. 

Immediately, by taking the quasisymmetric mapping $h$ to be $\varphi$ on $\mathbb S$ we have the following
\begin{cor}\label{Cor3}
    There exists a quasisymmetric mapping $h$ of $\mathbb S$ onto a rectifiable quasicircle $\Gamma$ such that the composition operator $V_{h}: f \mapsto f\circ h$ is a bounded linear operator from $H^{1/2}(\Gamma)$ into $H^{1/2}(\mathbb S)$, but $\Gamma$ is non-chord-arc. 
\end{cor}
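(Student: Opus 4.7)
The plan is to recycle the counterexample produced in part $(2)$ of Theorem \ref{rec}. That construction furnishes a rectifiable quasidisk $\Omega$, bounded by a rectifiable quasicircle $\Gamma$ that fails to be chord-arc, and for which the containment $H^{1/2}(\Gamma)\subset \mathcal{H}(\Gamma)$ holds with norm control $\|f\|_{\mathcal{H}(\Gamma)}\lesssim \|f\|_{H^{1/2}(\Gamma)}$. I would simply take $h$ to be the boundary values of the Riemann map $\varphi:\mathbb{D}\to\Omega$ restricted to $\mathbb{S}$; the curve $\Gamma$ supplied this way will already be the required non-chord-arc quasicircle.

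Next I would verify that this $h$ is indeed a quasisymmetric homeomorphism of $\mathbb{S}$ onto $\Gamma$. Because $\Gamma$ is a quasicircle, the Riemann map $\varphi$ extends to a quasiconformal homeomorphism of the whole plane (recalled in Section 3, right after Theorem \ref{quasi}). The boundary values of such a global quasiconformal map are quasisymmetric in the sense of Tukia--V\"ais\"al\"a, by the general equivalence between quasiconformality and quasisymmetry on planar domains stated in Section 3. Hence $h=\varphi|_{\mathbb{S}}$ meets the hypothesis.

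Finally, the boundedness of $V_h$ is essentially a one-line consequence of formula \eqref{norm}, which identifies
\[
\|f\|_{\mathcal{H}(\Gamma)} \;=\; \|f\circ\varphi\|_{H^{1/2}(\mathbb S)} \;=\; \|V_h f\|_{H^{1/2}(\mathbb S)}.
\]
Combining this with the inequality $\|f\|_{\mathcal{H}(\Gamma)}\lesssim \|f\|_{H^{1/2}(\Gamma)}$ already established for this specific $\Gamma$ in part $(2)$ of Theorem \ref{rec} gives $\|V_h f\|_{H^{1/2}(\mathbb S)}\lesssim \|f\|_{H^{1/2}(\Gamma)}$ for every $f\in H^{1/2}(\Gamma)$, which is exactly the required boundedness. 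There is no real obstacle here: the content is entirely in the non-Smirnov example of part $(2)$, and the corollary is a translation via the isometry \eqref{norm} between $\mathcal{H}(\Gamma)$ and $H^{1/2}(\mathbb S)$ induced by $\varphi$.
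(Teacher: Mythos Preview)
Your proposal is correct and follows exactly the paper's approach: the paper also takes $h=\varphi|_{\mathbb S}$ from the non-Smirnov example of part~(2) and invokes the identity \eqref{norm} (the corollary is stated with the preface ``Immediately, by taking the quasisymmetric mapping $h$ to be $\varphi$ on $\mathbb S$''). Your additional line justifying the quasisymmetry of $\varphi|_{\mathbb S}$ via the global quasiconformal extension is fine and merely makes explicit what the paper leaves implicit.
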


As a side remark, in \cite{js}, it is proven that if $\Omega$ is a quasidisk such that $\varphi'$, the derivative of its Riemann map, is a singular inner function, then $\Omega_e$ has to be a Smirnov domain. It follows that there are counterexamples which are Smirnov domains.

\medskip

\noindent\textbf{Acknowledgement} We would like to  thank Yves Meyer for having shared his idea leading to (iii) $\Rightarrow$ (i) of Theorem \ref{mey}. We also thank the referees for many constructive comments, which helped us to greatly improve the quality of this paper.

\section*{Declarations}
\noindent\textbf{Conflict of interest.} On behalf of all authors, the corresponding author states that there is no conflict of interest regarding the publication of this paper.

\bibliographystyle{alpha}

\end{document}